\renewcommand{\abovecaptionskip}{0pt}
\renewcommand{\belowcaptionskip}{6pt}
\renewcommand{\@makecaption}[2]{
\vspace{\abovecaptionskip}%
\sbox{\@tempboxa}{#1 #2}%
\global\@minipagefalse \hbox to \hsize {{\scshape \hfil #1 #2\hfil}}
\vspace{\belowcaptionskip}}
\newcommand{\al}{\alpha}
\newcommand{\rk}{\operatorname{rk}}
\newcommand{\SL}{\operatorname{SL}}
\newcommand{\Sp}{\operatorname{Sp}}
\newcommand{\diag}{\operatorname{diag}}
\DeclareMathOperator{\otimesZ}{\otimes\hspace{1pt}\rule[-3pt]{0pt}{0pt}_{\mathbb{Z}}}
 \DeclareMathOperator{\Supp}{Supp}
\newtheorem{theorem}{Theorem}
\newtheorem{proposition}{Proposition}
\newtheorem{lemma}{Lemma}
\newtheorem{corollary}{Corollary}
\newtheorem*{question*}{Question}
\theoremstyle{definition}
\newtheorem{dfn}{Definition}
\newtheorem*{dfn*}{Definition}
\newtheorem{example}{Example}
\theoremstyle{remark}
\newtheorem{remark}{Remark}
\begin{document}

\renewcommand{\proofname}{Proof}
\renewcommand{\abstractname}{Abstract}
\renewcommand{\refname}{References}
\renewcommand{\figurename}{Figure}
\renewcommand{\tablename}{Table}

\title[Harmonic analysis on spherical homogeneous spaces]{Harmonic analysis on spherical homogeneous spaces with solvable stabilizer}

\author{Roman Avdeev and Natalia Gorfinkel}

\thanks{Partially supported by Russian Foundation for Basic Research, grant no. 09-01-00648}

\address{Chair of Higher Algebra, Department of Mechanics and Mathematics,
Moscow State University, 1, Leninskie Gory, Moscow, 119992, Russia}

\email{suselr@yandex.ru, nataly.gorfinkel@gmail.com}


\subjclass[2010]{20G05, 22E46, 43A85, 14M27, 14M17}

\keywords{Algebraic group, homogeneous space, spherical subgroup,
representation, semigroup}

\begin{abstract}
For all spherical homogeneous spaces~$G/H$, where $G$ is a simply
connected semisimple algebraic group and $H$ a connected solvable
subgroup of~$G$, we compute the spectra of the representations of
$G$ on spaces of regular sections of homogeneous line bundles
over~$G/H$.
\end{abstract}

\maketitle

\sloppy

\section{Introduction} \label{introduction}

\subsection{} \label{intro}

Let $G$ be a connected semisimple complex algebraic group and let
$H$ be a closed subgroup of~$G$. One of the problems of harmonic
analysis on the homogeneous space $G/H$ is to find the spectrum of
the representation of $G$ on the space $\mathbb C[G/H]$ of regular
functions on~$G/H$. An important characteristic of this spectrum is
the so-called \textit{weight semigroup} $\Lambda_+(G/H)$. It
consists of dominant weights of $G$ such that the space $\mathbb
C[G/H]$, considered as a $G$-module, contains the irreducible
$G$-submodule with highest weight~$\lambda$.

The problem described above admits a natural generalization. Namely,
one may consider the problem of finding the spectra of the
representations of $G$ on spaces of regular sections of homogeneous
line bundles over~$G/H$. The whole collection of these spectra also
determines a semigroup $\widehat \Lambda_+(G/H)$ called the
\textit{extended weight semigroup} of the homogeneous space~$G/H$.
The exact definition of this semigroup will be given in
\S\,\ref{EWS}. The semigroup $\Lambda_+(G/H)$ is naturally
identified with a subsemigroup of $\widehat \Lambda_+(G/H)$ (see
\S\,\ref{EWS}).

A subgroup $H \subset G$ (resp. a homogeneous space $G/H$) is said
to be \textit{spherical} if a Borel subgroup $B \subset G$ has an
open orbit in~$G/H$. In~\cite{VK} the following criterion for $H$ to
be spherical was proved.

\begin{theorem}[{\rm \cite[Theorem~1]{VK}}]\label{sphericity}
\textup{(1)} A subgroup $H \subset G$ is spherical if and only if
for every homogeneous line bundle $L$ over $G/H$ the representation
of\, $G$ on the space of regular sections of $L$ is multiplicity
free.

\textup{(2)} In the case of quasi-affine~$G/H$, a subgroup $H
\subset G$ is spherical if and only if the representation of\, $G$
on the space $\mathbb C[G/H]$ of regular functions on $G/H$ is
multiplicity free.
\end{theorem}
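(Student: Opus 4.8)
The plan is to translate statements about multiplicities of representations into statements about semiinvariant functions on the group~$G$. A homogeneous line bundle $L_\chi$ over $G/H$ is determined by a character $\chi$ of~$H$, and its space of regular sections is identified, as a $G$-module under left translations, with the space $\{f\in\mathbb C[G]\colon f(gh)=\chi(h)^{-1}f(g)\ \text{for all}\ h\in H\}$ of regular functions on $G$ that are semiinvariant of weight~$\chi$ under right translations by~$H$. Since the multiplicity of the irreducible $G$-module $V(\lambda)$ of highest weight $\lambda$ in any rational $G$-module equals the dimension of the subspace of $U$-invariant vectors of $T$-weight $\lambda$ in that module (here $U\subset B$ is the unipotent radical and $T\subset B$ a maximal torus), this multiplicity in $\Gamma(G/H,L_\chi)$ equals the dimension of the space of those $f\in\mathbb C[G]$ which, under left translations, are $B$-semiinvariant of the weight corresponding to $\lambda$, and, under right translations, are $H$-semiinvariant of weight~$\chi$. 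Thus part~(1) reduces to: $H$ is spherical if and only if every such space of $(B\times H)$-semiinvariants in $\mathbb C[G]$ is at most one-dimensional; and, taking $\chi$ trivial and using $\mathbb C[G/H]=\mathbb C[G]^H$, part~(2) reduces to the analogous statement about $B$-semiinvariants in $\mathbb C[G/H]$.

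For the implication ``spherical $\Rightarrow$ multiplicity free'': if $B$ has an open (hence dense) orbit on $G/H$, then $BgH$ is dense in $G$ for a suitable $g$, and a function $f\in\mathbb C[G]$ that is a left $B$-semiinvariant and a right $H$-semiinvariant is, on the dense subset $BgH$, completely determined by $f(g)$ together with the two eigencharacters; being regular, $f$ is then determined up to a scalar, so the relevant space has dimension $\le1$. The same argument carried out directly on $G/H$ gives the forward implication in part~(2), and in fact requires neither quasi-affineness nor any assumption on~$H$.

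For the converse I argue by contraposition, via Rosenlicht's theorem on rational invariants. If $H$ is not spherical, $B$ has no dense orbit on $G/H$, so there is a non-constant $B$-invariant rational function $\varphi$ on $G/H$; pulled back along $G\to G/H$ it becomes a non-constant rational function on $G$ invariant under left translations by $B$ and right translations by~$H$. Since $G$ is affine, $\varphi=p/q$ with $p,q\in\mathbb C[G]$, and the ideal of denominators $\mathfrak a=\{f\in\mathbb C[G]\colon f\varphi\in\mathbb C[G]\}$ is a nonzero ideal stable under both (commuting) actions. From $\mathfrak a$ one extracts a nonzero common semiinvariant $f$; then $f$ and $f\varphi$ lie in $\mathbb C[G]$, are semiinvariants of the same weight, and are linearly independent because $\varphi\notin\mathbb C$. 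Hence that weight occurs with multiplicity $\ge2$, so some $\Gamma(G/H,L_\chi)$ fails to be multiplicity free, which is the contrapositive of~(1). Part~(2) is obtained by running the same argument inside $\mathbb C[G/H]$ rather than $\mathbb C[G]$; this is exactly the point at which quasi-affineness is used, since it guarantees $\mathbb C(G/H)=\operatorname{Frac}(\mathbb C[G/H])$, so that $\varphi$ really is a ratio of regular functions on $G/H$ and the ideal of denominators may be taken inside $\mathbb C[G/H]$.

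The step I expect to be most delicate is the extraction of an honest common semiinvariant from the stable ideal $\mathfrak a$: the left $B$-eigenvectors in $\mathfrak a$ span a nonzero right $H$-stable subspace whose homogeneous components (with respect to the left $T$-weight) are finite-dimensional right $H$-modules, and one then needs a right $H$-eigenvector inside one of them. When $H$ is connected solvable --- the case of interest in this paper --- this is immediate from the Lie--Kolchin theorem applied to the solvable group $B\times H$; in general (as in~\cite{VK}) it needs slightly more, for instance passing to a Borel subgroup of $H$, or to the common kernel of all characters of $H$, and keeping track of the effect on line bundles. The remaining bookkeeping --- the correspondence between homogeneous line bundles and characters of~$H$, and the tracking of dominant weights and signs --- is routine.
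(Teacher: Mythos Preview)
The paper does not give its own proof of this theorem: it is quoted verbatim from~\cite[Theorem~1]{VK} and used as a black box. So there is nothing in the paper to compare your argument against.

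That said, your proposal is essentially the classical argument and is correct. The forward implications are exactly as you say (a $(B\times H)$-semiinvariant is determined on the dense set $BgH$ by its value at~$g$). For the converse, the Rosenlicht/ideal-of-denominators trick is the standard device, and your identification of the one genuinely delicate point---extracting an $H$-eigenvector from a finite-dimensional $H$-stable subspace---is accurate. For connected solvable~$H$ (the only case used in this paper) Lie--Kolchin disposes of it, as you note; for general~$H$ one route is to observe that $H$ is spherical iff a Borel subgroup $B_H\subset H$ is spherical (the map $G/B_H\to G/H$ is open, so an open $B$-orbit upstairs maps to one downstairs, and conversely a generic $B$-orbit upstairs has the same dimension as its image), reduce to the solvable case to get two independent $(B\times B_H)$-semiinvariants $f,\,f\varphi$ of the same weight, and then note that the finite-dimensional span of their right $H$-translates consists of left $B$-eigenvectors of a fixed weight and, after decomposing under~$H$, contains an irreducible $H$-submodule of multiplicity~$\ge2$; tensoring by its dual produces a line bundle whose sections are not multiplicity free. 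Your remark that quasi-affineness enters precisely to ensure $\mathbb C(G/H)=\operatorname{Frac}\mathbb C[G/H]$, so that the denominator ideal can be taken in $\mathbb C[G/H]$, is also correct.
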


It follows from this theorem that for a spherical homogeneous space
$G/H$ the semigroup $\widehat \Lambda_+(G/H)$ determines the
$G$-module structures on spaces of regular sections of all
homogeneous line bundles over~$G/H$ and $\Lambda_+(G/H)$ determines
the $G$-module structure on $\mathbb C[G/H]$. In this connection,
computation of the semigroups $\Lambda_+(G/H)$ and $\widehat
\Lambda_+(G/H)$ for spherical homogeneous spaces $G/H$ is of
interest.

At present, considerable advancements have been achieved in
computing weight semigroups for \textit{affine} spherical
homogeneous spaces $G/H$ (that is, with reductive $H$). Namely,
there is a description of the semigroups $\Lambda_+(G/H)$ for all
\textit{strictly irreducible} simply connected affine spherical
homogeneous spaces $G/H$ (see their definition, for instance,
in~\cite[\S\,1.4]{Avd_EWS}), among them are all simply connected
affine spherical homogeneous spaces of simple groups. For such
spaces $G/H$, in the case of simple $G$ the semigroups
$\Lambda_+(G/H)$ were computed in~\cite{Kr} and in the case of
non-simple $G$ the semigroups $\widehat \Lambda_+(G/H)$ were
computed in~\cite{Avd_EWS}. Using these results and simple
additional considerations, one can obtain a description of the
semigroups $\widehat \Lambda_+(G/H)$ for \textit{all} simply
connected affine spherical homogeneous spaces~$G/H$, however this
aspect has not yet been reflected in the literature.

\begin{remark}
As far as the authors are informed, in the case of non-simple $G$
the semigroups $\widehat \Lambda_+(G/H)$ for all simply connected
strictly irreducible affine spherical homogeneous spaces $G/H$ were
computed by Yu.\,V.~Dzyadyk as long ago as in~1985. However these
results have not been published.
\end{remark}

For non-affine spherical homogeneous spaces $G/H$, the state of the
art in the computation of the semigroups $\Lambda_+(G/H)$ or
$\widehat \Lambda_+(G/H)$ is much more complicated: the authors are
aware of only several particular cases of computing these
semigroups. For instance, it is not hard to describe the semigroup
$\widehat \Lambda_+(G/H)$ in the case where $H$ is an intermediate
subgroup between some parabolic subgroup of $G$ and its derived
subgroup. (Such subgroups $H$ are exactly the \textit{horospherical}
subgroups, that is, subgroups containing some maximal unipotent
subgroup of~$G$; see~\cite[\S\,2]{Kn} about that.) Another
particular case was examined in~\cite{Gor}, where the semigroups
$\widehat \Lambda_+(G/H)$ were computed in the case where $G$ is
simply connected and $H = T U'$ ($U$ is a maximal unipotent subgroup
of~$G$, $U'$ is its derived subgroup, $T$ is a maximal torus in~$G$
normalizing~$U$).

In the general case, for spherical homogeneous spaces $G/H$ it is
more convenient to compute the semigroup $\widehat \Lambda_+(G/H)$.
This is caused by the following reason: if $G$ is simply connected,
then $\widehat \Lambda_+(G/H)$ is free for any spherical subgroup $H
\subset G$ (see Theorem~\ref{semigroup_is_free} in
\S\,\ref{EWS_spherical}). In this case, to compute the semigroup
$\widehat \Lambda_+(G/H)$ it suffices to find its rank and present
the required number of its indecomposable elements. As far as the
semigroup $\Lambda_+(G/H)$ is concerned, it turns out to be free
much more rarely. Namely, for $G$ simply connected and $H$ connected
a known sufficient condition for this semigroup to be free is that
$H$ has no non-trivial characters (see~\cite[Proposition~2]{Pan90}).
We note that, generally speaking, this condition is quite
restrictive.

In the present paper we compute the semigroups $\widehat
\Lambda_+(G/H)$ for all spherical homogeneous spaces~$G/H$, where
$G$ is a simply connected semisimple group and $H$ is a connected
solvable subgroup of~$G$. The approach used for this purpose
combines ideas contained in~\cite{Gor} and~\cite{Avd_solv}. Namely,
we generalize the technique applied in~\cite{Gor} for computing the
semigroups $\widehat \Lambda_+(G/H)$ in the case $H = TU'$ (see
above) to the case of an arbitrary connected solvable spherical
subgroup $H \subset G$, making use of a structure theory of
connected solvable spherical subgroups in semisimple algebraic
groups developed in~\cite{Avd_solv}. The computation results are
expressed in terms of combinatorial data considered
in~\cite{Avd_solv}. The main result of this paper is
Theorem~\ref{main_theorem} (see \S\,\ref{theorem}).

\subsection{} \label{EWS}

Throughout this paper the ground field is the field $\mathbb C$ of
complex numbers, all topological terms relate to the Zarisky
topology, all groups are supposed to be algebraic and their
subgroups closed. Tangent algebras of groups denoted by capital
Latin letters are denoted by the corresponding small German letters.
For any group $L$ we denote by $\mathfrak X(L)$ the group of its
characters in additive notation.

In what follows, $G$ denotes a connected semisimple algebraic group.
In~$G$ we fix a Borel subgroup $B$ and a maximal torus $T$ contained
in~$B$. The maximal unipotent subgroup of $G$ contained in $B$ is
denoted by~$U$. The groups $\mathfrak X(B)$ and $\mathfrak X(T)$ are
identified by restricting characters from $B$ to~$T$. The set of
dominant weights of $G$ with respect to $B$ is denoted
by~$\Lambda_+(G)$; $\Lambda_+(G) \subset \mathfrak X(B)$. This
notation agrees with the notation $\Lambda_+(G/H)$ introduced in
\S\,\ref{intro} in the case $H=\{e\}$. For $\lambda \in
\Lambda_+(G)$ we denote by $V(\lambda)$ the irreducible $G$-module
with highest weight~$\lambda$ and by $v_\lambda$ a fixed
highest-weight vector in~$V(\lambda)$ (with respect to~$B$). For
every $\lambda \in \Lambda_+(G)$ the highest weight of the
irreducible $G$-module dual to $V(\lambda)$ is denoted
by~$\lambda^*$.

The actions of $G$ on itself by left translation ($(g,x)\mapsto gx$)
and right translation ($(g,x)\mapsto xg^{-1}$) induce
representations of $G$ on the space $\mathbb C[G]$ of regular
functions on $G$ by the formulas $(gf)(x) =f(g^{-1}x)$ and $(gf)(x)
=f(xg)$, respectively ($g,x \in G$, $f \in \mathbb C[G]$). For
brevity, we refer to these actions as the action \textit{on the
left} and \textit{on the right}, respectively (where $g,x \in G$ and
$f \in \mathbb C[G]$). For every subgroup $L\subset G$ we denote by
${}^L\mathbb C[G]$ (resp. $\mathbb C[G]^L$) the algebra of functions
in $\mathbb C[G]$ that are invariant under the action of $L$ on the
left (resp. on the right).

Let $H \subset G$ be an arbitrary subgroup. We recall
(see~\cite[Theorem~4]{Pop}) that the homogeneous line bundles over
$G/H$ are in one-to-one correspondence with the characters of~$H$.
Namely, a character $\chi \in \mathfrak X(H)$ corresponds to a
homogeneous line bundle $L(\chi) = (G \times \mathbb C_\chi)/H$ over
$G/H$, where $H$ acts on $G$ by right translation and on the space
$\mathbb C_\chi \simeq \mathbb C$ by means of the character~$\chi$.
The fiber of $L(\chi)$ over the point $eH$ is the line~$\mathbb
C_\chi$. For every $\chi \in \mathfrak X(H)$ there is a natural
$G$-equivariant isomorphism between the space $\Gamma(L(-\chi))$ of
regular sections of $L(-\chi)$ and the space
$$
V_{\chi} = \bigl\{f\in \mathbb{C}[G] \mid f(gh)=\chi(h)f(g)\ \
\forall\,g\in G, \ \forall\,h\in H\bigr\} \subset \mathbb C[G].
$$
Under this isomorphism, a function $f \in V_\chi$ corresponds to the
section $\gamma_f \in \Gamma(L(-\chi))$ given by the formula
$\gamma_f(gH) = [g,f(g)]$, where $[g,f(g)]$ is the class in
$L(-\chi)$ of the pair $(g,f(g))$. It is easy to see that the space
$V_0 \simeq \Gamma(L(0))$ corresponding to the character $\chi = 0$
is nothing else but the space $\mathbb C[G]^H = \mathbb C[G/H]$ of
regular functions on~$G/H$. We note that $\bigoplus \limits_{\chi
\in \mathfrak X(H)}V_\chi = \mathbb C[G]^{H_0}$, where the subgroup
$H_0 \subset H$ is the intersection of kernels of all characters
of~$H$.

Suppose that $\lambda \in \Lambda_+(G)$ and $\chi \in \mathfrak
X(H)$. A function $f \in \mathbb C[G]$ is said to be
\textit{$(B\times H)$-semi-invariant of weight $(\lambda,\chi)$} if
$f$ is semi-invariant of weight $\lambda$ under the action of $B$ on
the left and semi-invariant of weight $\chi$ under the action of $H$
on the right, that is, $f(b^{-1}gh)=\lambda(b)\chi(h)f(g)$ for all
$b \in B$, $g \in G$, $h \in H$. Every non-zero $(B\times
H)$-semi-invariant function of weight $(\lambda, \chi)$ is a highest
weight vector of some irreducible $G$-submodule in $V_\chi$ with
highest weight~$\lambda$, and vice versa. We denote by
$A(\lambda,\chi)$ the subspace in $\mathbb C[G]$ consisting of all
$(B\times H)$-semi-invariant functions of weight $(\lambda, \chi)$.
It is easy to see that the multiplicity with which the irreducible
$G$-module with highest weight $\lambda$ occurs in the $G$-module
$V_\chi$ equals $\dim A(\lambda,\chi)$.

\begin{dfn}
The \textit{extended weight semigroup} $\widehat \Lambda_+(G/H)$ of
a homogeneous space $G/H$ is the set of pairs $(\lambda, \chi)$
(where $\lambda \in \Lambda_+(G)$ and $\chi \in \mathfrak X(H)$)
such that $\dim A(\lambda,\chi) \geqslant 1$.
\end{dfn}

The inclusion $A(\lambda_1, \chi_1) A(\lambda_2, \chi_2) \subset
A(\lambda_1 + \lambda_2, \chi_1 + \chi_2)$ implies that the set
$\widehat \Lambda_+(G/H)$ is indeed a semigroup. We note that this
semigroup always contains the element $(0,0)$ since the space $V_0 =
\mathbb C[G/H]$ of regular functions on $G/H$ always contains the
constants. Thus $\widehat \Lambda_+(G/H)$ is a monoid.

Evidently, $\Lambda_+(G/H) \simeq \{(\lambda,\chi) \in \widehat
\Lambda_+(G/H) \mid \chi=0\} \subset \widehat \Lambda_+(G/H)$. In
particular, $\Lambda_+(G/H) \simeq \widehat \Lambda_+(G/H)$ whenever
$\mathfrak X(H) = 0$.

We recall that there is the following isomorphism of $(G \times
G)$-modules:
\begin{equation}\label{GG-module}
\mathbb C[G] \simeq \bigoplus \limits_{\lambda \in \Lambda_+(G)}
V(\lambda^*)\otimes V(\lambda);
\end{equation}
on the left-hand side, $G\times G$ acts on the left and on the
right, and in each summand on the right-hand side the left (resp.
right) factor of $G \times G$ acts on the left (resp. right) tensor
factor. We note that for fixed $\lambda \in \Lambda_+(G)$ the
($G\times G$)-module $V(\lambda^*) \otimes V(\lambda)$ is embedded
in $\mathbb C[G]$ as follows: the image of an element $u\otimes v
\in V(\lambda^*) \otimes V(\lambda)$ is the function whose value at
each point $g \in G$ equals $\langle u, gv \rangle$, where $\langle
\cdot\,, \cdot \rangle$ is the natural pairing between
$V(\lambda^*)$ and~$V(\lambda)$. Under isomorphism~(\ref{GG-module})
the subspace $A(\lambda^*, \chi) \subset \mathbb C[G]$ corresponds
to the subspace $v_{\lambda^*} \otimes V(\lambda)_{\chi}^{(H)}
\subset V(\lambda^*) \otimes V(\lambda)$, where
$V(\lambda)_{\chi}^{(H)} \subset V(\lambda)$ is the subspace of
$H$-semi-invariant vectors of weight~$\chi$. Hence $\dim
A(\lambda^*, \chi) = \dim V(\lambda)_{\chi}^{(H)}$ and $(\lambda^*,
\chi) \in \widehat \Lambda_+(G/H)$ if and only if
$V(\lambda)_{\chi}^{(H)} \ne 0$.

\subsection{} \label{EWS_spherical}

We now suppose that $H \subset G$ is a spherical subgroup. In this
case, Theorem~\ref{sphericity} implies that the condition
$(\lambda^*, \chi) \in \widehat \Lambda_+(G/H)$ is equivalent to
either of the two conditions $\dim A(\lambda^*,\chi) = 1$ and $\dim
V(\lambda)_{\chi}^{(H)} = 1$.

An important a priori information about $\widehat \Lambda_+(G/H)$ is
given by the following theorem.

\begin{theorem}\label{semigroup_is_free}
If $G$ is simply connected, then for every spherical subgroup ${H
\subset G}$ the semigroup $\widehat \Lambda_+(G/H)$ is free. More
precisely, $\widehat \Lambda_+(G/H)$ is isomorphic to the semigroup
$\mathcal D(G/H)$ of effective $B$-stable divisors in~$G/H$, which
is freely generated by the finite set of $B$-stable prime divisors
in~$G/H$.
\end{theorem}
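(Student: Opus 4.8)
The plan is to establish an isomorphism of semigroups between $\widehat\Lambda_+(G/H)$ and the semigroup $\mathcal{D}(G/H)$ of effective $B$-stable divisors on $G/H$, then to invoke the standard fact that on a spherical homogeneous space there are only finitely many $B$-stable prime divisors (the colours together with the closures of codimension-one $B$-orbits are finite in number), whence $\mathcal{D}(G/H)$ is a free commutative monoid on a finite set. Since $H$ is spherical, by Theorem~\ref{sphericity} the space $V_\chi$ is multiplicity free for every $\chi$, so for each pair $(\lambda,\chi)\in\widehat\Lambda_+(G/H)$ the space $A(\lambda,\chi)$ is one-dimensional; fix a generator $f_{\lambda,\chi}$. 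The key point is that the product $f_{\lambda_1,\chi_1}f_{\lambda_2,\chi_2}$ is a nonzero element of $A(\lambda_1+\lambda_2,\chi_1+\chi_2)$, hence a nonzero scalar multiple of $f_{\lambda_1+\lambda_2,\chi_1+\chi_2}$; this says the map $(\lambda,\chi)\mapsto f_{\lambda,\chi}$ is, up to scalars, a semigroup homomorphism, and it identifies $\widehat\Lambda_+(G/H)$ with the monoid of $B$-semi-invariant rational functions (of all possible $H$-weights on the right) modulo constants.

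First I would make precise the passage to divisors. Consider $G$ as a quasi-affine variety; since $G$ is simply connected and semisimple, $\mathbb{C}[G]$ is a UFD and $\mathfrak X(G)=0$, so every $B$-semi-invariant regular function on $G$ is determined up to scalar by its divisor, and the divisor of a $(B\times H)$-semi-invariant function of weight $(\lambda,\chi)$ is a $B$-stable (under left translation) effective divisor on $G$ whose push-forward to $G/H$ (using that the function transforms by a character of $H$ on the right, so its zero divisor descends) is an effective $B$-stable divisor $D_{\lambda,\chi}$ on $G/H$. Conversely, given an effective $B$-stable divisor $D$ on $G/H$, pull it back to a $B$-stable divisor on $G$; because $\mathbb{C}[G]$ is a UFD this pull-back is the divisor of a regular function $f$, unique up to scalar, and $B$-stability forces $f$ to be $B$-semi-invariant on the left while the $H$-invariance of $D$ forces $f$ to be $H$-semi-invariant on the right of some weight $\chi$. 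This sets up mutually inverse maps $\widehat\Lambda_+(G/H)\leftrightarrow\mathcal{D}(G/H)$, and since $\operatorname{div}(f_1f_2)=\operatorname{div}(f_1)+\operatorname{div}(f_2)$ these maps are semigroup isomorphisms.

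Then I would identify $\mathcal{D}(G/H)$ itself. The open $B$-orbit $\mathcal{O}\subset G/H$ has complement a finite union of irreducible $B$-stable closed subsets; those of codimension one are exactly the $B$-stable prime divisors $D_1,\dots,D_m$ (these include both the ``$G$-stable'' prime divisors and the colours, in Luna--Vinberg terminology). Any effective $B$-stable divisor is supported on $\bigcup D_i$ since its restriction to the affine open $B$-variety $\mathcal{O}$, having no $B$-semi-invariant non-constant regular functions apart from scalars (as $B$ acts with a dense orbit and $G$ simply connected kills characters on the relevant level), must be trivial there; hence $\mathcal{D}(G/H)=\bigoplus_{i=1}^m \mathbb{Z}_{\ge 0} D_i$, a free monoid of rank~$m$.

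The main obstacle I expect is the bookkeeping in the descent step: one must check carefully that a $B$-stable (on the left) prime divisor on $G$ that is \emph{not} a union of $H$-cosets on the right nevertheless contributes correctly — this is where the character $\chi$ enters, and one needs that every $B$-semi-invariant $f\in\mathbb{C}[G]$ with $B$-stable (on the left) divisor automatically satisfies $f(gh)=\chi(h)f(g)$ for a suitable $\chi\in\mathfrak X(H)$, which follows because $H$ connected acting by right translation permutes the (irreducible, reduced) components of $\operatorname{div}(f)$ trivially, hence fixes $\operatorname{div}(f)$, hence multiplies $f$ by a character of $H$. Once this is in hand, combining the three identifications above — $\widehat\Lambda_+(G/H)\cong\mathcal{D}(G/H)\cong\bigoplus_{i=1}^m\mathbb{Z}_{\ge 0}D_i$ — yields both assertions of the theorem: freeness, and the explicit description of the generators as the $B$-stable prime divisors on $G/H$.
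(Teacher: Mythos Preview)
Your approach is correct and close in spirit to the paper's, but the two arguments are organized differently. The paper constructs the bijection directly on $G/H$: a $B$-stable effective divisor $D$ on $G/H$ is Cartier (since $G/H$ is smooth), hence yields a line bundle together with a regular section~$s$; because $G$ is simply connected, this line bundle is $G$-linearizable by Popov's theorem, so it equals $L(-\chi)$ for some $\chi\in\mathfrak X(H)$, and $B$-stability of $D$ forces $s\in A(\lambda,\chi)$ for some~$\lambda$. You instead pull everything back to $G$ and exploit factoriality of $\mathbb C[G]$ to produce the function; this is precisely the alternative viewpoint the paper records in Remark~\ref{rem_irr}. Your route is slightly more elementary (no Picard-group input on $G/H$), while the paper's route makes the role of the characters~$\chi$ as labels for homogeneous line bundles more transparent.

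Two small corrections are in order. First, in your ``main obstacle'' paragraph you invoke connectedness of $H$, but the theorem does not assume $H$ connected, and your own argument does not need it: the pullback to $G$ of a divisor on $G/H$ is automatically right-$H$-invariant (it is a union of fibres of $G\to G/H$), so the regular function cutting it out is $H$-semi-invariant simply because the only units in $\mathbb C[G]$ are constants. Second, the open $B$-orbit $\mathcal O\subset G/H$ need not be affine; all you actually use is that a proper $B$-stable closed subset cannot meet~$\mathcal O$, so every $B$-stable prime divisor lies in $G/H\setminus\mathcal O$, and there are only finitely many of these because a spherical variety has only finitely many $B$-orbits.
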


In the case of a connected spherical subgroup $H \subset G$ the fact
that $\widehat \Lambda_+(G/H)$ is free can be proved by an argument
due to Panyushev, although in his paper~\cite{Pan90} he applied it
to a more particular situation, namely, to prove that the semigroup
$\Lambda_+(G/H)$ is free if $\mathfrak X(H) = 0$
(see~\cite[Proposition~2]{Pan90}). For an adaptation of this
argument to the situation under consideration
see~\cite[Theorem~1]{Avd_EWS}. The following proof of
Theorem~\ref{semigroup_is_free} in the general case was communicated
to the authors by D.\,A.~Timashev.

\begin{proof}[Proof of Theorem~\textup{\ref{semigroup_is_free}}]
Regard the map $\rho \colon \widehat \Lambda_+(G/H) \to \mathcal
D(G/H)$ defined as follows. To each element $(\lambda, \chi) \in
\widehat \Lambda_+(G/H)$ we assign the divisor of zeros of an
(arbitrary) non-zero section in the one-dimensional subspace
$A(\lambda, \chi) \subset V_\chi \simeq \Gamma(L(-\chi))$. It is
easy to see that this divisor is $B$-stable and the map $\rho$ is a
semigroup homomorphism. Conversely, let $D \in \mathcal D(G/H)$ be
an arbitrary divisor. Being a Weil divisor on~$G/H$, $D$ is locally
principal and hence determines a line bundle $L$ over $G/H$ together
with a section $s$ (uniquely determined up to proportionality) in
such a way that $D$ is the divisor of~$s$. As $D$ is effective, the
section $s$ is regular. Further, since $G$ is simply connected, its
action on $G/H$ `lifts' to an action on~$L$, that is, the bundle $L$
is homogeneous (see~\cite[Proposition~1 and Theorem~4]{Pop}). The
divisor $D$ is $B$-stable, hence $s$ is $B$-semi-invariant. It
follows that $\rho$ is a bijection and thereby an isomorphism.
\end{proof}

\begin{remark} \label{rem_irr}
In the case of simply connected $G$, the algebra $\mathbb C[G]$ is
factorial (see~\cite[Corollary from Proposition~1]{Pop}); therefore,
under the assumptions of Theorem~\ref{semigroup_is_free}, the
semigroup $\widehat \Lambda_+(G/H)$ is also isomorphic to the
semigroup of effective $(B \times H)$-stable divisors in~$G$. Under
this isomorphism an element $(\lambda, \chi) \in \widehat
\Lambda_+(G/H)$ corresponds to the divisor of zeros of some (any)
non-zero $(B \times H)$-semi-invariant function in $A(\lambda,
\chi)$. In the case of connected $H$ the semigroup of effective $(B
\times H)$-stable divisors in $G$ is freely generated by the finite
set of prime $(B \times H)$-stable divisors. In this situation, an
element $(\lambda, \chi) \in \widehat \Lambda_+(G/H)$ is
indecomposable if and only if the corresponding non-zero ${(B \times
H)}$-semi-invariant function in $A(\lambda, \chi)$ is irreducible
in~$\mathbb C[G]$.
\end{remark}

\subsection{Some notation and conventions. }~

$e$ is the identity element of any group

$|X|$ is the cardinality of a finite set~$X$

$V^*$ is the space of linear functions on a vector space~$V$

$\diag(a_1, \ldots, a_n)$ is the diagonal matrix of order $n$ with
elements $a_1, \ldots, a_n$ on the diagonal

For a group $L$, the notation $L = L_1 \rightthreetimes L_2$
signifies that $L$ decomposes into a semidirect product of subgroups
$L_1, L_2$, that is, $L = L_1L_2$, $L_1 \cap L_2 = \{e\}$, and $L_2$
is a normal subgroup of~$L$.

We identify the lattice $\mathfrak X(T)$ with a sublattice in
$\mathfrak t^*$ by associating each character $\mu \in \mathfrak
X(T)$ with its differential $d\mu \in \mathfrak t^*$.

In $\mathfrak X(T)$ (and thereby in $\mathfrak t^*$), we fix the
root system $\Delta$ with respect to $T$ and the set of positive
roots $\Delta_+ \subset \Delta$ with respect to~$B$. Let $\Pi =
\{\alpha_1, \ldots, \alpha_n\} \subset \Delta_+$ be the set of
simple roots and let $\omega_1, \ldots, \omega_n \in \mathfrak X(T)
\otimesZ \mathbb Q$ be the fundamental weights corresponding to the
simple roots $\alpha_1, \ldots, \alpha_n$, respectively. In the case
of simply connected $G$ one has $\omega_1, \ldots, \omega_n \in
\mathfrak X(T)$.

For every root $\alpha \in \sum\limits_{\gamma \in \Pi}k_\gamma
\gamma \in \Delta_+$, we put $\Supp \alpha = \{\gamma \mid k_\gamma
> 0\}$.

Let $W = N_G(T)/T$ be the Weyl group. In the space $\mathfrak X(T)
\otimesZ \mathbb Q$ we fix an inner product $(\cdot\,,\,\cdot)$
invariant under~$W$. For $\lambda, \mu \in \mathfrak X(T)$ (${\mu
\ne 0}$), we put $\langle \lambda |\, \mu \rangle =
\cfrac{2(\lambda,\mu)}{(\mu,\mu)}$.

Let $\Delta^\vee \subset \mathfrak t$ be the root system dual
to~$\Delta$. For every $\alpha \in \Delta$ we denote by $h_\alpha$
the corresponding element in~$\Delta^\vee$.

In each root subspace $\mathfrak g_\alpha \subset \mathfrak g$ we
choose a basis vector $e_\alpha$ in such a way that the condition
$[e_\alpha, e_{-\alpha}] = h_\alpha$ is fulfilled for every $\alpha
\in \Delta$.

\subsection*{Acknowledgements}

The authors are deeply grateful to E.\,B.~Vinberg and
D.\,A.~Timashev for reading the previous version of this paper and
valuable comments.

\section{Formulation of the main result} \label{result}

\subsection{} \label{theorem}

In order to state the main theorem, we need some facts from the
structure theory of connected solvable spherical subgroups in
semisimple algebraic groups; see~\cite{Avd_solv}.

Let $H \subset B$ be a connected solvable subgroup and let $N
\subset U$ be its unipotent radical. We say that $H$ is
\textit{standardly embedded} in~$B$ (with respect to~$T$) if the
subgroup $S = H \cap T \subset T$ is a maximal torus in~$H$.
Obviously, in this situation one has $H = S \rightthreetimes N$.
Every connected solvable subgroup in $G$ is conjugate by a suitable
element of $G$ to a subgroup standardly embedded in~$B$.

Suppose that a connected solvable spherical subgroup ${H \subset G}$
standardly embedded in~$B$ is fixed. As above, we put $S = H \cap T$
and $N = H \cap U$ so that $H = S \rightthreetimes N$. We identify
the groups $\mathfrak X(H)$ and $\mathfrak X(S)$ by restricting
characters from $H$ to~$S$. We denote by $\tau \colon \mathfrak
X(T)\to\mathfrak X(S)$ the character restriction map from $T$
to~$S$. Let $\Phi = \tau(\Delta_+) \subset \mathfrak X(S)$ be the
weight system of the action of $S$ on $\mathfrak u$ by means of the
adjoint representation of~$G$. One has $\mathfrak u = \bigoplus
\limits_{\varphi \in \Phi} \mathfrak u_\varphi$, where $\mathfrak
u_\varphi \subset \mathfrak u$ is the weight subspace of weight
$\varphi$ with respect to~$S$. Let $\mathfrak n = \bigoplus
\limits_{\varphi \in \Phi} \mathfrak n_\varphi$ be the decomposition
of $\mathfrak n$ into the direct sum of weight subspaces with
respect to~$S$. Here for every $\varphi \in \Phi$ one has $\mathfrak
n_\varphi \subset \mathfrak u_\varphi$. For every $\varphi \in \Phi$
let $c_\varphi$ denote the codimension of $\mathfrak n_\varphi$
in~$\mathfrak u_\varphi$.

In the notation introduced above, there is the following sphericity
criterion for a connected solvable subgroup in~$G$.

\begin{theorem}[{\rm \cite[Theorem~1]{Avd_solv}}]\label{solvable_spherical}
Let $H \subset G$ be a connected solvable subgroup standardly
embedded in~$B$. The following conditions are equivalent:

\textup{(1)}  $H$ is spherical in~$G$;

\textup{(2)} $c_\varphi \leqslant 1$ for every $\varphi \in \Phi$,
and all weights with $c_\varphi = 1$ are linearly independent
in~$\mathfrak X(S)$.
\end{theorem}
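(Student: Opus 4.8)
The plan is to prove this sphericity criterion by computing, or at least estimating, the generic modality of the $B$-action on $G/H$ — equivalently, the codimension of a generic $B$-orbit, or the transcendence degree of the field $\mathbb{C}(G/H)^B$ — and showing that this vanishes exactly when condition~(2) holds. Since $H = S \rightthreetimes N$ with $S \subset T$ and $N \subset U$, it is convenient to pass through the intermediate space $G/N$ and study the action of $B \times (S \times \text{(torus part of $B$)})$ appropriately, but the cleanest route is to work directly with $B \times H$ acting on $G$. First I would reduce to an infinitesimal/dimension count: $H$ is spherical iff $\dim G/H = \dim B - \dim(\text{generic stabilizer})$, i.e. iff there exists $g \in G$ with $\mathfrak{b} + \mathrm{Ad}(g)\mathfrak{h} = \mathfrak{g}$ having the minimal possible intersection, so that $\dim(\mathfrak{b} \cap \mathrm{Ad}(g)\mathfrak{h})$ achieves its generic value and the open $B$-orbit condition becomes $\mathfrak{b} + \mathrm{Ad}(g)\mathfrak{h} = \mathfrak{g}$ for generic~$g$.

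The second step is to organize the complement of $\mathfrak{n}$ in $\mathfrak{u}$ by weights of~$S$. For each $\varphi \in \Phi$ we have the weight space $\mathfrak{u}_\varphi$ containing $\mathfrak{n}_\varphi$ of codimension $c_\varphi$; the "defect" directions, those root vectors $e_\alpha$ with $\tau(\alpha) = \varphi$ not captured by $\mathfrak{n}$, are what a generic $B$-translate must recover. The idea is that the $S$-part of $H$ can only contribute the full torus $\mathfrak{s}$ (which together with $\mathfrak{t} \subset \mathfrak{b}$ gives nothing new), so the "missing" span that $B$ must supply from a generic translate $\mathrm{Ad}(g)\mathfrak{h}$ is controlled by how the weights $\varphi$ with $c_\varphi \geq 1$ sit inside $\mathfrak{X}(S)$. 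Concretely, I expect to show: (a) if some $c_\varphi \geq 2$, then for dimension reasons a generic $B$-orbit in $G/H$ has positive codimension — there is simply not enough room, since each $B$-translate can only adjust a bounded-codimension piece and two missing directions in a single $S$-weight cannot both be filled; (b) if all $c_\varphi \leq 1$ but the weights $\varphi$ with $c_\varphi = 1$ satisfy a nontrivial linear relation $\sum m_\varphi \varphi = 0$ in $\mathfrak{X}(S)$, then one produces a nonconstant $B$-semiinvariant rational function on $G$ (a monomial in the matrix coefficients corresponding to the relevant root vectors) which is $H$-invariant, hence a nonconstant element of $\mathbb{C}(G/H)^B$, contradicting sphericity; and conversely (c) if (2) holds, one exhibits $g$ with $\mathfrak{b} + \mathrm{Ad}(g)\mathfrak{h} = \mathfrak{g}$ explicitly, choosing $g \in U$ so that $\mathrm{Ad}(g)$ mixes the weight spaces just enough, using linear independence to guarantee the relevant determinant is generically nonzero.

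The main obstacle, I expect, is the converse direction~(c): producing the explicit generic point of the open $B$-orbit. One must show that the linear independence of the $c_\varphi = 1$ weights is not merely necessary but sufficient, which amounts to checking that a certain matrix — whose rows are indexed by the simple-root directions needed to complete $\mathfrak{b} + \mathfrak{h}$ to $\mathfrak{g}$ and whose entries are polynomial functions of the parameters of $g \in U$ — has generically full rank. The linear independence hypothesis should guarantee that the relevant characters of $T/S$ appearing as exponents are distinct, so that a Vandermonde-type or triangularity argument forces nonvanishing of the determinant; but setting this up requires a careful choice of ordering on the roots (e.g. by height, compatible with $\Supp$) and a careful bookkeeping of which root vectors $e_\alpha$ fall into which $\mathfrak{u}_\varphi$. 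A secondary technical point is handling the case where several distinct roots $\alpha$ restrict to the same $\varphi$: there $\mathfrak{n}_\varphi$ is a hyperplane in $\mathfrak{u}_\varphi$ and one must verify that the single "missing" direction can still be recovered from the $B$-translate, which is where the interaction between the $T$-action (rescaling within $\mathfrak{u}_\varphi$) and the unipotent part of the translate becomes essential.
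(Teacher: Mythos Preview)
This theorem is not proved in the present paper at all: it is quoted verbatim from \cite[Theorem~1]{Avd_solv} and used as a black box (for instance, in the two proofs of Proposition~\ref{prop_rank} and in the final irreducibility argument). Consequently there is no ``paper's own proof'' here to compare your proposal against.

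For what it is worth, your outline is a reasonable plan for an independent proof. The necessity direction via a dimension/modality count and via producing a nonconstant $(B\times H)$-invariant rational function from a linear relation among the $\varphi$'s is standard and should go through. The sufficiency direction~(c) is indeed the delicate part, and your diagnosis of the difficulty is correct: one must organise the root combinatorics (how several roots can restrict to the same~$\varphi$, and how the ``missing'' hyperplane directions in the various $\mathfrak u_\varphi$ interact under $\mathrm{Ad}(g)$) carefully enough to force a generic nonvanishing determinant. In \cite{Avd_solv} this is handled through a structural analysis of the sets $\Psi_i$ and the map~$\pi$ (the same apparatus the present paper later exploits), rather than by an ad hoc Vandermonde argument; if you want to carry out your sketch you will likely be led to rediscover that machinery.
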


Later on, we assume $H \subset G$ to be a connected solvable
spherical subgroup standardly embedded in~$B$ and retain the
notation introduced above. We put $\Psi = {\{\alpha \in \Delta_+
\mid \mathfrak g_\alpha \not \subset \mathfrak n\}} \subset
\Delta_+$.

\begin{dfn}
The roots in $\Psi$ are called \textit{active}.
\end{dfn}

Let $\varphi_1, \ldots, \varphi_m$ denote all the weights $\varphi
\in \Phi$ with $c_\varphi = 1$. For $i = 1, \ldots, m$ we put
$\Psi_i = \{\alpha \in \Psi \mid \tau(\alpha) = \varphi_i\}$ and
$\mathfrak u_i = \bigoplus \limits_{\alpha \in \Psi_i} \mathfrak
g_\alpha$. It is obvious that $\Psi = \Psi_1 \cup \Psi_2 \cup \ldots
\cup \Psi_m$ and $\mathfrak u_i \subset \mathfrak u_{\varphi_i}$.

The set of active roots possesses the following property
(see~\cite[Lemma~4]{Avd_solv}): if $\alpha$ is an active root and
$\alpha = \beta + \gamma$ for some $\beta, \gamma \in \Delta_+$,
then exactly one of the two roots $\beta, \gamma$ is active.

\begin{dfn}
We say that an active root $\beta$ is \textit{subordinate} to an
active root~$\alpha$ if there is a root $\gamma \in \Delta_+$ such
that $\alpha = \beta + \gamma$.
\end{dfn}

\begin{proposition}[{\rm \cite[Proposition~3]{Avd_solv}}]
Let $\alpha$ be an active root. There is a unique simple root
${\pi(\alpha) \in \Supp \alpha}$ with the following property: if
$\alpha = \alpha_1 + \alpha_2$ for some $\alpha_1, \alpha_2 \in
\Delta_+$, then $\al_1$ \textup(resp.~$\alpha_2$\textup) is active
if and only if $\pi(\al) \notin \Supp \al_1$ \textup{(}resp.
$\pi(\al) \notin \Supp \al_2$\textup{)}.
\end{proposition}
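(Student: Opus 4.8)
The plan is to prove the existence and uniqueness of $\pi(\alpha)$ simultaneously, by induction on the height $\hgt\alpha$ of the active root $\alpha$, using the key property of active roots recalled just above: whenever an active root is written as a sum of two positive roots, exactly one summand is active. First I would dispose of the base case: if $\hgt\alpha = 1$, then $\alpha$ is simple, it cannot be written as a sum of two positive roots at all, and $\Supp\alpha = \{\alpha\}$, so $\pi(\alpha) = \alpha$ works and is trivially unique. For the inductive step, suppose $\hgt\alpha \ge 2$. Then $\alpha$ admits at least one decomposition $\alpha = \beta + \gamma$ with $\beta,\gamma \in \Delta_+$ (a standard fact: every non-simple positive root is a sum of two positive roots, indeed one can always split off a simple root). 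By the active-root property exactly one of $\beta,\gamma$ is active; relabel so that $\beta$ is active and $\gamma$ is not. Since $\hgt\beta < \hgt\alpha$, the inductive hypothesis gives a simple root $\pi(\beta) \in \Supp\beta$ with the stated property for $\beta$. The natural guess is $\pi(\alpha) := \pi(\beta)$, and the first task is to show this is well defined, i.e. independent of the chosen decomposition.

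To establish that $\pi(\alpha)$ is well defined and has the required property, I would argue as follows. Fix the decomposition $\alpha = \beta + \gamma$ above and set $\delta := \pi(\beta) \in \Supp\beta \subset \Supp\alpha$. I claim: for \emph{any} decomposition $\alpha = \alpha_1 + \alpha_2$ into positive roots, $\alpha_1$ is active iff $\delta \notin \Supp\alpha_1$ (and symmetrically for $\alpha_2$). One direction of the bookkeeping is clear from support additivity: $\Supp\alpha = \Supp\alpha_1 \sqcup' \Supp\alpha_2$ in the sense that each simple root in $\Supp\alpha$ lies in at least one of the two (with the coefficient count adding up), so $\delta$ lies in $\Supp\alpha_1$ or in $\Supp\alpha_2$. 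I would like to show it lies in exactly one of them — this needs that the coefficient of $\delta$ in $\alpha$ equals $1$. That last point follows because $\delta = \pi(\beta)$ and, applying the inductive property of $\pi(\beta)$ to the trivial-type decompositions of $\beta$, the coefficient of $\pi(\beta)$ in $\beta$ is forced to be $1$ (otherwise one could write $\beta$ as a sum of two positive roots each still containing $\pi(\beta)$ in its support, and the active/inactive dichotomy would be violated); and $\gamma$ is inactive, so by running the argument for $\gamma$ one shows $\delta \notin \Supp\gamma$, whence the coefficient of $\delta$ in $\alpha$ is exactly the coefficient in $\beta$, namely $1$. So $\delta$ sits in the support of exactly one of $\alpha_1,\alpha_2$; say $\delta \in \Supp\alpha_1$, $\delta \notin \Supp\alpha_2$. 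It remains to identify which of $\alpha_1,\alpha_2$ is active. Here I would use the associativity of decompositions: combine $\alpha = \alpha_1 + \alpha_2$ with $\alpha = \beta + \gamma$ and look at how $\beta$ and $\gamma$ distribute. The cleanest route is to reduce to the case where the two decompositions "interleave" via a common refinement $\alpha = \mu_1+\mu_2+\mu_3$ with consecutive partial sums being roots (using repeatedly that a positive root which is a sum of two positive roots can be split off a simple root and that sub-sums of such chains are roots), and then chase the active/inactive labels down the chain using the active-root property at each binary split. The simple root carrying coefficient $1$ — which is $\delta$ — stays in exactly one branch at every split, and the active branch at the top is precisely the branch \emph{not} containing $\delta$; propagating this down shows the active one of $\alpha_1,\alpha_2$ is the one whose support excludes $\delta$. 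This proves both that $\pi(\alpha) = \delta$ is independent of the initial decomposition and that it has the asserted property.

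Finally, uniqueness: if $\pi' \in \Supp\alpha$ is any simple root with the stated property, then applying it to the decomposition $\alpha = \beta + \gamma$ (with $\beta$ active, $\gamma$ not) forces $\pi' \notin \Supp\gamma$ and $\pi' \in \Supp\beta$; and then applying the property of $\pi'$ to all decompositions of $\beta$ — which are also relevant decompositions "inside" $\alpha$ — shows $\pi'$ satisfies the defining property of $\pi(\beta)$ for the root $\beta$, so by the inductive uniqueness $\pi' = \pi(\beta) = \pi(\alpha)$. The main obstacle I anticipate is the "common refinement / chain" bookkeeping in the well-definedness step: one must be careful that an arbitrary pair of decompositions of $\alpha$ can be linked through intermediate roots so that the active-root dichotomy can be applied at each elementary step, and that the coefficient-one property of $\delta$ is genuinely forced rather than merely convenient. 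Everything else is either a height induction or a direct application of the recalled Lemma~4 of~\cite{Avd_solv}.
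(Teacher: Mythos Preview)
The paper does not give its own proof of this proposition; it is quoted verbatim from~\cite[Proposition~3]{Avd_solv} as background input to the theory, so there is nothing in the present paper to compare your argument against.

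Independently of that, your inductive construction has a basic orientation error that makes the approach fail as written. You pick a decomposition $\alpha=\beta+\gamma$ with $\beta$ active and $\gamma$ inactive, and then set $\pi(\alpha):=\pi(\beta)$. Test this choice against the very decomposition you started from: the property to be proved says that $\beta$ is active if and only if $\pi(\alpha)\notin\Supp\beta$. But $\pi(\beta)\in\Supp\beta$ by definition, so your choice gives $\pi(\alpha)\in\Supp\beta$, which would force $\beta$ to be \emph{inactive}---a contradiction. The proposition in fact forces $\pi(\alpha)$ to lie in the support of the \emph{inactive} summand $\gamma$ and to be absent from $\Supp\beta$; it can never equal $\pi(\beta)$. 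Your subsequent claim ``$\gamma$ is inactive, so \ldots\ $\delta\notin\Supp\gamma$'' is backwards for exactly the same reason, and your later sentence ``the active branch at the top is precisely the branch not containing~$\delta$'' already contradicts your own setup $\delta\in\Supp\beta$ with $\beta$ active.

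Once the candidate is relocated to $\Supp\gamma\setminus\Supp\beta$, the inductive hypothesis (which only applies to \emph{active} roots of smaller height) tells you nothing about the inactive summand $\gamma$, so the whole scheme has to be rethought rather than patched. The argument in~\cite{Avd_solv} relies on further structural results about active roots developed there.
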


Thus one has a map $\pi: \Psi \to \Pi$.

For every $j = 1, \ldots, m$, regard the set $\pi(\Psi_j)$. Let
$\alpha_{j_1}$, $\alpha_{j_2}$, $\ldots$, $\alpha_{j_r}$, where $r =
r(j) = |\pi(\Psi_j)|$, denote all roots contained in $\pi(\Psi_j)$.
We put $\lambda_j = \omega_{j_1} + \omega_{j_2} + \ldots +
\omega_{j_r}$.

We can now state the main result of the present paper.

\begin{theorem}\label{main_theorem}
If $G$ is simply connected, then the semigroup $\widehat
\Lambda_+(G/H)$ is freely generated by the elements $(\omega^*_i,
\tau(\omega_i))$, $i = 1, \ldots, n$, and the elements
$(\lambda^*_j, \tau(\lambda_j) - \varphi_j)$, $j = 1, \ldots, m$.
\end{theorem}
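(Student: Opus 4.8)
The plan is to use the interpretation of $\widehat\Lambda_+(G/H)$ given in Remark~\ref{rem_irr}: since $G$ is simply connected and $H = S \rightthreetimes N$ is connected, the monoid $\widehat\Lambda_+(G/H)$ is freely generated, and an element $(\lambda,\chi)$ is indecomposable exactly when the corresponding nonzero $(B\times H)$-semi-invariant function in $A(\lambda,\chi)$ is irreducible in $\mathbb C[G]$. So it suffices to (a) exhibit, for each listed pair, an explicit irreducible $(B\times H)$-semi-invariant function realizing it, and (b) show these $n+m$ pairs generate the whole semigroup, i.e. that there are no other indecomposable elements. Equivalently, via the isomorphism with the semigroup $\mathcal D(G/H)$ of $B$-stable effective divisors from Theorem~\ref{semigroup_is_free}, I must show there are exactly $n+m$ $B$-stable prime divisors on $G/H$ and identify their classes.

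First I would treat the ``constant-free'' part. The open $B$-orbit in $G/H$ pulls back, on the level of $G$, to the open $(B\times H)$-orbit, whose complement is a union of $(B\times H)$-stable prime divisors; I would describe $\mathbb C[G]^{(B\times H)}$-semi-invariants concretely using the Bruhat-type parametrization of the big cell and the structure data from \cite{Avd_solv}. The $n$ generators $(\omega_i^*,\tau(\omega_i))$ come from the standard ``Plücker'' functions: for each fundamental weight $\omega_i$ pick the matrix coefficient $f_i(g)=\langle v_{\omega_i^*}^{(low)}, g\, v_{\omega_i}\rangle$ type function, which is $B$-semi-invariant of weight $\omega_i^*$ on the left and, when restricted according to the embedding $H\subset B$, transforms under $S$ by $\tau(\omega_i)$ and is killed by $N$ for the right action precisely because $v_{\omega_i}$ spans an $H$-stable line of that weight (here one uses that $H\subset B$ and that $v_{\omega_i}$ is a highest-weight vector, hence fixed by $N$ up to the torus action). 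These $f_i$ are irreducible in $\mathbb C[G]$ (they are the fundamental semi-invariants of $G$), giving $n$ indecomposable elements. The $m$ remaining generators are the subtle ones: for each weight $\varphi_j$ with $c_{\varphi_j}=1$ there is a distinguished line in $\mathfrak u_{\varphi_j}$ \emph{not} lying in $\mathfrak n$, and following \cite{Avd_solv} one builds from the roots in $\pi(\Psi_j)$ a function in $V(\lambda_j)$ — a suitable product/contraction of lower-weight vectors indexed by $\alpha_{j_1},\dots,\alpha_{j_r}$ — which is an $N$-semi-invariant of weight $\tau(\lambda_j)-\varphi_j$ under the right $H$-action; the ``$-\varphi_j$'' shift is exactly the obstruction recording that one factor $\mathfrak g_\alpha$ is not in $\mathfrak n$. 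I would verify this is a genuine character of $H$ (this is where Theorem~\ref{solvable_spherical}, the linear independence of the $\varphi_j$, is used — it guarantees the $\chi$'s are well-defined and the corresponding divisors are distinct) and that the function is irreducible in $\mathbb C[G]$ (it should be a product of at most the $f_i$'s times something new, and a degree/weight count rules out nontrivial factorizations, using that $\lambda_j$ is multiplicity-free as a sum of distinct fundamental weights).

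Having produced $n+m$ indecomposable elements, I would prove there are no others by a counting argument: the number of $B$-stable prime divisors on a spherical $G/H$ equals the rank of $\widehat\Lambda_+(G/H)$, so it suffices to show this rank is $n+m$. One way: the $B$-stable divisors are of two kinds — the $G$-stable ones (boundary divisors, none here since $G/H$ is quasi-affine for $H$ solvable, or more precisely the $G$-unstable locus is controlled) and the ``colors'' (closures of codimension-one $B$-orbits in the open $G$-orbit), and I would match the colors to the simple roots $\alpha_i$ (giving $n$) and the extra $G$-stable/boundary divisors to the $\varphi_j$ (giving $m$), using the description of $G/H$ in \cite{Avd_solv}. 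Alternatively, and perhaps more cleanly, I would show directly that every $(B\times H)$-semi-invariant function on $G$ is, up to scalar, a product of the $f_i$'s and the $m$ new functions, by inducting on left-$B$-weight and using that $\mathbb C[G]^{(B\times N)}$ is a polynomial ring on which $S$ acts diagonalizably, then reading off the $S$-weights. The main obstacle I anticipate is step (b) in either form — ruling out further indecomposable generators — since the existence half is essentially a bookkeeping of constructions already in \cite{Avd_solv}, whereas the completeness half requires genuinely controlling the algebra $\mathbb C[G]^{(B\times H)}$, i.e. understanding all $B$-stable divisors, and making sure the $m$ ``new'' functions together with the $f_i$ already separate the open $(B\times H)$-orbit; the linear-independence clause of Theorem~\ref{solvable_spherical} will be the key input ensuring the count is exactly $m$ and not fewer.
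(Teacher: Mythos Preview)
Your three-stage plan---compute the rank as $n+m$, exhibit $n+m$ elements, prove they are indecomposable---is exactly the paper's strategy, and your treatment of the $n$ ``Pl\"ucker'' generators $(\omega_i^*,\tau(\omega_i))$ is correct. However, the two substantive steps are mis-specified.

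For the $m$ remaining generators, you describe the $H$-semi-invariant vector in $V(\lambda_j)$ as ``a suitable product/contraction of lower-weight vectors indexed by $\alpha_{j_1},\dots,\alpha_{j_r}$''. This is not what works: the vector is $w_j=\sum_{\beta\in\Psi_j}\frac{a_\beta}{\lambda_j(h_\beta)}\,e_{-\beta}v_{\lambda_j}$, a linear combination indexed by the active roots $\beta\in\Psi_j$ (not by $\pi(\Psi_j)$), with coefficients taken from the linear form $\xi_j$ cutting out $\mathfrak n\cap\mathfrak u_j$ inside $\mathfrak u_j$. Checking that $\mathfrak n$ annihilates $w_j$ is the technical core of the proof (the paper's Proposition~\ref{annul}); it requires the active-root results of \cite{Avd_solv} (Propositions~\ref{Psi's}, \ref{xi's}, \ref{al_sim_be}) and a Jacobi-identity computation to handle $\mathfrak n\cap\mathfrak u_i$ for $i\neq j$. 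Your proposal contains no hint of this mechanism.

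Your rank computation is also confused: on the homogeneous space $G/H$ there are no $G$-stable divisors whatsoever (the action is transitive), so the dichotomy ``colors versus $G$-stable/boundary divisors'' is empty---all $n+m$ prime $B$-stable divisors are colors. The paper's count instead works with $(B\times H)$-stable prime divisors in $G$: the $n$ components of $G\setminus G_0$ (which are $(B\times B)$-stable, not $G$-stable) plus $m$ divisors inside the big cell $G_0\simeq B\sigma_0\times U$, the latter corresponding via $U/N\simeq\mathfrak u/\mathfrak n\simeq\bigoplus_j\mathbb C_{\varphi_j}$ to the $m$ coordinate hyperplanes. Finally, the irreducibility of $Q_j$ is not a mere weight count: one must show $Q_j$ is not divisible by any $P_i$ (the paper does this via a derivation argument applied to $Q_j=f_j(P_{j_1}\cdots P_{j_r})$) and separately that $Q_j|_{G_0}$ is irreducible.
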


This theorem will be proved in~\S\,\ref{proof}.

\subsection{} \label{examples}

Let us present two examples of application of
Theorem~\ref{main_theorem}. In both cases, the sphericity of~$H$
follows from Theorem~\ref{solvable_spherical}.

\begin{example}{ \cite{Gor}}
Suppose that $G$ is simply connected and $H = TU'$. Then $S = T$, $N
= U'$, $\mathfrak n = \bigoplus \limits_{\alpha \in \Delta_+
\backslash \Pi} \mathfrak g_{\alpha}$, $\tau = \rm{id}$, $m = n$,
$\Psi = \Pi$, $\pi = \mathrm{id}$. The semigroup $\widehat
\Lambda_+(G/H)$ is freely generated by the elements $(\omega_i^*,
\omega_i)$, $(\omega_i^*, \omega_i - \alpha_i)$, where $i = 1,
\ldots, n$.
\end{example}

\begin{example}{}
Suppose that $G = \SL_4$ and the groups $B, U, T$ consist of all
upper-triangular, upper unitriangular, diagonal matrices,
respectively, contained in~$G$. For $t = \diag(t_1,t_2,t_3,t_4) \in
T$ and $k = 1, 2, 3$ we put $\alpha_k(t) = t_kt_{k+1}^{-1}$.
Consider a subgroup $H = S \rightthreetimes N \subset G$, where $S =
\{\diag(s_1, s_2, s_2^{-1}, s_1^{-1}) \mid s_1, s_2 \in \mathbb
C^\times\}$ and $\mathfrak n \subset \mathfrak u$ is the set of all
matrices of the form
$$\begin{pmatrix}0 & a & b & c\\ 0 & 0 & d & b\\
0 & 0 & 0 & -a \\ 0 & 0 & 0 & 0\end{pmatrix},
$$
where $a,b,c,d \in \mathbb C$ are arbitrary numbers. We note that
$H$ is a Borel subgroup of the group $\Sp_4 \subset G$ preserving
the skew-symmetric form with matrix
$$
\begin{pmatrix}
0 & 0 & 0 & 1\\
0 & 0 & 1 & 0\\
0 & -1 & 0 & 0\\
-1 & 0 & 0 &0
\end{pmatrix}.
$$
One has $n = 3$, $m = 2$, $\varphi_1 = \tau(\alpha_1) =
\tau(\alpha_3)$, $\varphi_2 = \tau(\alpha_1 + \alpha_2) =
\tau(\alpha_2 + \alpha_3)$, $\Psi_1 = \{\alpha_1, \alpha_3\}$,
$\Psi_2 = \{\alpha_1 + \alpha_2, \alpha_2 + \alpha_3\}$,
$\pi(\alpha_1) = \alpha_1$, $\pi(\alpha_3) = \alpha_3$,
$\pi(\alpha_1 + \alpha_2) = \pi(\alpha_2 + \alpha_3) = \alpha_2$.
For $s = \diag(s_1, s_2, s_2^{-1}, s_1^{-1}) \in S$ we put
$\chi_1(s) = s_1$, $\chi_2(s) = s_1s_2$. The semigroup $\widehat
\Lambda_+(G/H)$ is freely generated by the five elements $(\omega_3,
\tau(\omega_1)) = (\omega_3, \chi_1)$, $(\omega_2, \tau(\omega_2)) =
(\omega_2, \chi_2)$, $(\omega_1, \tau(\omega_3)) = (\omega_1,
\chi_1)$, $(\omega_1 + \omega_3, \tau(\omega_1 + \omega_3) -
\varphi_1) = (\omega_1 + \omega_3, \chi_2)$, $(\omega_2,
\tau(\omega_2) - \varphi_2) = (\omega_2, 0)$.
\end{example}

\section{Auxiliary results} \label{auxiliary results}

This section contains all facts needed in the proof of
Theorem~\ref{main_theorem}.

\subsection{} \label{approach_for_rank}

In this subsection we describe a general approach to computing the
rank of the semigroup $\widehat \Lambda_+(G/H)$ in the case of an
arbitrary connected subgroup $H \subset G$. This approach will be
applied in \S\,\ref{rank} to computing the rank of $\widehat
\Lambda_+(G/H)$ under the assumptions of Theorem~\ref{main_theorem}.

We first recall the following notion.

Let $L$ be a reductive group and let $B_L$ be a Borel subgroup
of~$L$. The \textit{rank} of the action $L : X$ of $L$ on an
irreducible variety $X$ is the rank of the lattice $\Lambda(X)
\subset \mathfrak X(B_L)$, where $\Lambda(X)$ consists of weights
$\mu \in \mathfrak X(B_L)$ such that the field $\mathbb C(X)$ of
rational functions on $X$ contains a non-zero $B_L$-semi-invariant
function of weight~$\mu$. We denote by $r_L(X)$ the rank of the
action $L:X$. By the rank of a homogeneous space $L/K$ we shall mean
the rank of the natural action $L : L/K$ by left translation.

Let $H$ be an arbitrary connected subgroup of~$G$. Regard the
homogeneous space $G/H_0$ (see the definition of the subgroup $H_0$
in~\S\,\ref{EWS}). It is quasi-affine since $H_0$ has no non-trivial
characters. There is a transitive action of the group $\widehat G =
G \times H/H_0$ on $G/H_0$, where $G$ acts on the left and $H/H_0$
acts on the right. Under this action, the stabilizer of the point
$eH_0$ is the subgroup $\widehat H = \{(h, hH_0) \mid h \in H\}
\subset \widehat G$, which is isomorphic to~$H$. Thus there is an
isomorphism of varieties $G/H_0 \simeq \widehat G/ \widehat H$. The
algebras $\mathbb C[G/H_0]$ and $\mathbb C[\widehat G/ \widehat H]$
are isomorphic as $\widehat G$-modules, hence there is a semigroup
isomorphism $\widehat \Lambda_+(G/H) \simeq \Lambda_+(\widehat G/
\widehat H)$. Since $\widehat G/ \widehat H$ is quasi-affine, the
lattice $\Lambda (\widehat G / \widehat H)$ is generated by the
semigroup $\Lambda_+(\widehat G/ \widehat H)$ (see, for
instance,~\cite[Proposition~5.14]{Tim}), therefore $\rk
\Lambda_+(\widehat G/ \widehat H) = \rk \Lambda(\widehat G/ \widehat
H)$. Hence $\rk \widehat \Lambda_+(G/H) = r_{\widehat G}(\widehat
G/\widehat H)$. The rank of the homogeneous space $\widehat G /
\widehat H$ can be computed by using the following general result of
Panyushev.

\begin{proposition}[{\rm \cite[Theorem~1.2(ii)]{Pan94}}] \label{formula_for_rank}
Let $L$ be a connected reductive group and let $P \subset L$ be a
parabolic subgroup with Levi decomposition $P = P_r \rightthreetimes
P_u$, where $P_r$ and $P_u$ are a maximal reductive subgroup and the
unipotent radical of~$P$, respectively. Let $K \subset L$ be a
connected subgroup with Levi decomposition $K = K_r \rightthreetimes
K_u$, where $K_r$ and $K_u$ are a maximal reductive subgroup and the
unipotent radical of~$K$, respectively, with $K_r \subset P_r$ and
$K_u \subset P_u$. Finally, let $F \subset K_r$ be a generic
stabilizer for the action $K_r: \mathfrak p_r / \mathfrak k_r$
\textup(the subgroup $F$ is reductive\textup). Then $r_L(L/K) =
r_{P_r}(P_r/K_r) + r_F(P_u/K_u)$.
\end{proposition}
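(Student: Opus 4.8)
The plan is to peel off the parabolic in two stages. Since $K_r\subset P_r$ and $K_u\subset P_u$ give $K=K_rK_u\subset P_rP_u=P$, the projection $\phi\colon L/K\to L/P$ is $L$-equivariant with fiber $\phi^{-1}(eP)\cong P/K$. Choosing the Borel $B_L\subset P$, the flag variety $L/P$ is $L$-spherical of rank~$0$: its big cell $P_u^-\cdot eP$ is a single dense $B_L$-orbit, and the isotropy of a base point $x_0$ of this open orbit is $B_L\cap P^-=B_L\cap P_r=:B_{P_r}$, a Borel subgroup of~$P_r$ (here $P^-$ is the opposite parabolic and $P_u^-$ its unipotent radical). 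First I would invoke the local structure theorem: over the dense $B_L$-orbit, $L/K$ is the associated bundle $B_L\times_{B_{P_r}}\phi^{-1}(x_0)$, so restriction to the fiber $\phi^{-1}(x_0)\cong P/K$ identifies $B_L$-semi-invariant rational functions on $L/K$ with $B_{P_r}$-semi-invariant rational functions on $P/K$. Since the restriction map $\mathfrak X(B_L)\to\mathfrak X(B_{P_r})$ is an isomorphism (both equal $\mathfrak X(T_{P_r})$, as $B_L=B_{P_r}\rightthreetimes P_u$ and $P_u$ is unipotent), this identification preserves weights, giving $r_L(L/K)=r_{P_r}(P/K)$, the rank of the left $P_r$-action on~$P/K$.

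Next I would analyze $P/K$ through the $P_r$-equivariant map $\psi\colon P/K\to P_r/K_r$. Writing $P=P_r\rightthreetimes P_u$, a point is a class $(m,v)K$ with $m\in P_r$, $v\in P_u$; the right $K$-action reads $(m,v)\cdot(k_r,u)=(mk_r,\,k_r^{-1}vk_r\,u)$, the group $P_r$ acts by $m'\cdot(m,v)=(m'm,v)$, and the fiber of $\psi$ over $eK_r$ is $P_u/K_u$ with $K_r$ acting by conjugation. The main computational tool is the identity $r_M(X)=\dim(B_M x)-\dim(U_M x)$ for a connected reductive group $M$ with Borel $B_M=T_MU_M$ and a point $x$ in general position (the rank is the dimension of a generic $T_M$-orbit on the rational quotient $X/\!/U_M$). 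I would apply this with $M=P_r$ and split each orbit dimension along~$\psi$. For generic $m$ the isotropy $B_{P_r}\cap mK_rm^{-1}$ is conjugate to $\widetilde F:=B'\cap K_r$, where $B'$ is a Borel of $P_r$ in general position; this $\widetilde F$, with unipotent radical $\widetilde U:=U'\cap K_r$, is precisely the generic isotropy of $K_r$ on the flag variety $P_r/B_{P_r}$. Bookkeeping the base and fiber contributions along $\psi$ then yields, for generic $(m,v)$,
\[
r_{P_r}(P/K)=\dim T_{P_r}-\dim\mathrm{Stab}_{\widetilde F}(vK_u)+\dim\mathrm{Stab}_{\widetilde U}(vK_u),
\]
and the very same computation applied separately to $P_r/K_r$ and to the $\widetilde F$-variety $P_u/K_u$ shows that the right-hand side equals $r_{P_r}(P_r/K_r)+r_{\widetilde F}(P_u/K_u)$.

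It remains to replace $\widetilde F$ by the group $F$ of the statement. The hard part is the general fact that $\widetilde F=B'\cap K_r$ — the generic isotropy of $K_r$ on the flag variety $P_r/B_{P_r}$ — is a Borel subgroup of the reductive generic stabilizer $F$ of the linear $K_r$-action on the slice $\mathfrak p_r/\mathfrak k_r=T_{eK_r}(P_r/K_r)$. This is where the theory of generic stabilizers (Richardson, Luna's slice theorem) enters: passing to the slice representation at $eK_r$ relates the generic isotropy of $K_r$ on $P_r/K_r$ (hence that on the flag variety, realized as $B_{P_r}\cap mK_rm^{-1}$) to the generic isotropy on $\mathfrak p_r/\mathfrak k_r$, the former being a Borel subgroup of the latter. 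Granting this, $r_{\widetilde F}(P_u/K_u)=r_F(P_u/K_u)$ holds by the very definition of the rank of a reductive-group action (the rank is computed through any Borel subgroup), and combining the two reductions gives
\[
r_L(L/K)=r_{P_r}(P/K)=r_{P_r}(P_r/K_r)+r_F(P_u/K_u),
\]
as required. I expect the flag-variety reduction and the orbit-dimension bookkeeping to be routine; the genuine obstacle is the Borel-subgroup identification $\widetilde F=B_F$, which must be extracted carefully from the generic-stabilizer formalism and underlies Panyushev's original argument.
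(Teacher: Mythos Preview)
This proposition is not proved in the paper: it is quoted verbatim from Panyushev's article~\cite{Pan94} (Theorem~1.2(ii) there) and used as a black box in the first proof of Proposition~\ref{prop_rank}. There is therefore no in-paper argument to compare your proposal against.

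For what it is worth, your two-stage reduction (first $L/K\to L/P$ via the local structure of the flag variety, then $P/K\to P_r/K_r$ via the generic-orbit formula $r_M(X)=\dim B_Mx-\dim U_Mx$) is indeed the shape of Panyushev's proof in~\cite{Pan94}. The point you single out as ``the genuine obstacle'' is exactly where the work lies: the identification of the generic isotropy $\widetilde F$ of $K_r$ on the flag variety $P_r/B_{P_r}$ with a Borel subgroup of the generic stabilizer $F$ of $K_r$ on $\mathfrak p_r/\mathfrak k_r$. In Panyushev's treatment this comes from his earlier complexity/rank formalism (\cite{Pan90}) rather than from Luna's slice theorem directly; the slice-theoretic route you suggest is plausible but would need to be made precise, since the generic stabilizer of $K_r$ on the quasi-affine variety $P_r/K_r$ and on its tangent space $\mathfrak p_r/\mathfrak k_r$ need not coincide without an argument. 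One minor technical wrinkle in your first reduction: the fibre of $\phi$ over the base point $x_0$ of the open $B_L$-orbit is $w_0\cdot(P/K)$ rather than $P/K$, so the $B_{P_r}$-action is twisted by conjugation; this is harmless for the rank but should be tracked.
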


\subsection{} \label{active_root_theory}

In this subsection we collect all necessary results of active root
theory.

\begin{proposition}\label{Psi's}
Suppose that $1 \leqslant i,j \leqslant m$ and different roots
$\alpha \in \Psi_i$, $\beta \in \Psi_j$ are such that $\gamma =
\beta - \alpha \in \Delta_+$. Then

\textup{(a)~\cite[Proposition~1]{Avd_solv}} $\Psi_i + \gamma \subset
\Psi_j$;

\textup{(b)~\cite[Proposition~10(c)]{Avd_solv}} if $|\Psi_i|
\geqslant 2$, then $\gamma$ is a unique positive root with the
property $\Psi_i + \gamma \subset \Psi_j$.
\end{proposition}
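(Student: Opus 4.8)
The plan is to establish both parts simultaneously by a careful analysis of how active roots with prescribed $\tau$-value differ. Fix $\alpha \in \Psi_i$, $\beta \in \Psi_j$ with $\gamma = \beta - \alpha \in \Delta_+$. For part~(a), I would take an arbitrary $\alpha' \in \Psi_i$ and show $\alpha' + \gamma \in \Psi_j$. The key observation is that $\tau(\alpha') = \varphi_i = \tau(\alpha)$, so $\tau(\alpha' + \gamma) = \tau(\alpha) + \tau(\gamma) = \tau(\beta) = \varphi_j$; thus once we know $\alpha' + \gamma$ is a root \emph{and} that it is active, it automatically lands in $\Psi_j$. To see that $\alpha' + \gamma \in \Delta_+$, I would use the string structure: since $\alpha \in \Psi_i$ with $c_{\varphi_i} = 1$, all elements of $\Psi_i$ are ``close'' to each other, and $\alpha' - \alpha$ lies in the root lattice with $\tau(\alpha' - \alpha) = 0$; combined with $\beta - \alpha = \gamma \in \Delta_+$ one deduces (via the $\mf{sl}_2$-theory for the string through $\alpha'$ in the direction $\gamma$, or directly from~\cite{Avd_solv}) that $\alpha' + \gamma$ is a root. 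Finally, activeness of $\alpha' + \gamma$ follows from the decomposition property of active roots quoted just before the Definition of subordinate roots: writing $\alpha' + \gamma = \alpha' + \gamma$ with $\alpha'$ active and $\gamma$ inactive (the latter because $\tau(\gamma) = \varphi_j - \varphi_i \ne 0$ would need checking, or more robustly because $\mf g_\gamma \subset \mf n$ can be argued from the weight-space dimension count), exactly one summand is active, and it must be $\alpha' + \gamma$ together with $\gamma$ being inactive forcing $\alpha'+\gamma$ active — here one has to be slightly careful and instead argue that $\mf g_{\alpha'+\gamma} \not\subset \mf n$ directly.

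For part~(b), assume $|\Psi_i| \ge 2$ and suppose $\gamma' \in \Delta_+$ also satisfies $\Psi_i + \gamma' \subset \Psi_j$. Then for every $\alpha' \in \Psi_i$ both $\alpha' + \gamma$ and $\alpha' + \gamma'$ lie in $\Psi_j$, so their difference $\gamma - \gamma'$ is a difference of two elements of $\Psi_j$. If $\gamma \ne \gamma'$, this means $\Psi_j$ contains two roots differing by $\gamma - \gamma'$; but more importantly, picking two \emph{distinct} $\alpha_1, \alpha_2 \in \Psi_i$ (possible since $|\Psi_i| \ge 2$), we get four roots $\alpha_1 + \gamma$, $\alpha_1 + \gamma'$, $\alpha_2 + \gamma$, $\alpha_2 + \gamma'$ in $\Psi_j$, and the configuration $\{\alpha_1, \alpha_2, \alpha_1+\gamma, \dots\}$ inside a single root system is highly constrained. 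I would derive a contradiction from the fact that $\Psi_j$, having $c_{\varphi_j} \le 1$, cannot contain such a ``parallelogram'' of roots: the vectors $\alpha_1 - \alpha_2$ and $\gamma - \gamma'$ would both be nonzero elements of $\ker \tau$ realized as differences within $\Psi_i \cup \Psi_j$, and the linear-independence clause of Theorem~\ref{solvable_spherical}(2) (applied after relating these differences back to the weights $\varphi$ with $c_\varphi = 1$) rules this out. Concretely, I expect that $\tau(\gamma) = \tau(\gamma') = \varphi_j - \varphi_i$ forces $\gamma - \gamma' \in \ker\tau$, and then a short argument with root lengths and the fact that $\gamma, \gamma'$ are both positive roots of the \emph{same} $\tau$-weight pins down $\gamma = \gamma'$.

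The main obstacle I anticipate is the activeness bookkeeping in part~(a): establishing that $\alpha' + \gamma$ is not merely a root but genuinely active, i.e.\ $\mf g_{\alpha'+\gamma} \not\subset \mf n$. This requires tracking the $S$-weight decomposition $\mf u_{\varphi_j} = \mf n_{\varphi_j} \oplus (\text{line})$ and showing that $\mf g_{\alpha'+\gamma}$ is not contained in $\mf n_{\varphi_j}$, which in turn uses that $\mf n$ is a subalgebra (so $[\mf n, \mf n] \subset \mf n$) together with the bracket $[\mf g_{\alpha'}, \mf g_\gamma]$ being related to $\mf g_{\alpha' + \gamma}$ — and one needs $\mf g_\gamma \subset \mf n$ versus $\mf g_{\alpha'} \not\subset \mf n$ to conclude via the one-dimensionality of $\mf u_{\varphi_j}/\mf n_{\varphi_j}$. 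Everything else is essentially rank-one $\mf{sl}_2$ computations on root strings plus the linear-independence hypothesis; since this is quoted verbatim from~\cite{Avd_solv}, in the paper itself this Proposition is simply cited, and the ``proof'' amounts to pointing to the two references.
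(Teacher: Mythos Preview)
You correctly observe at the end of your proposal that the paper does not prove this proposition at all: both parts are simply cited from~\cite{Avd_solv} (Proposition~1 and Proposition~10(c) there), with no argument given. So there is no ``paper's proof'' to compare against; the content of the proof lives entirely in the referenced paper.

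That said, your sketch of an independent argument has a real gap in part~(a). You assert that $\alpha' + \gamma$ is a root for every $\alpha' \in \Psi_i$, but the justification offered (``string structure'', ``close to each other'', ``$\mathfrak{sl}_2$-theory for the string through $\alpha'$ in the direction $\gamma$'') does not establish this. Knowing that $\tau(\alpha') = \tau(\alpha)$ and that $\alpha + \gamma = \beta$ is a root gives no a~priori control over whether $\alpha' + \gamma$ is a root: two roots with the same $\tau$-image need not lie on a common $\gamma$-string, and the inner product $\langle \alpha' \,|\, \gamma \rangle$ is not determined by $\tau$-data alone. The actual argument in~\cite{Avd_solv} uses that $\mathfrak n$ is a subalgebra normalized by~$S$ together with the codimension-one condition to force the root-space $\mathfrak g_{\alpha'}$ to interact with $\mathfrak g_\gamma$ in a specific way; your remark about tracking $[\mathfrak g_{\alpha'}, \mathfrak g_\gamma]$ via the subalgebra property is on the right track for \emph{activeness}, but you would first need $\alpha' + \gamma \in \Delta$ before that bracket is nonzero, and that is precisely the missing step. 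Your part~(b) outline is also too loose: the ``parallelogram'' obstruction you describe is suggestive, but the appeal to linear independence of the $\varphi_k$ does not by itself forbid two distinct elements of $\ker\tau$ from arising as differences within $\Psi_j$, so the contradiction is not yet pinned down.
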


\begin{corollary}\label{root_difference}
Suppose that $1 \leqslant i \leqslant m$ and roots $\alpha, \beta
\in \Psi_i$ are different. Then $\alpha - \beta \notin \Delta$.
\end{corollary}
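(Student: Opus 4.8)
The plan is to derive a contradiction from the assumption that $\alpha - \beta \in \Delta$ for distinct $\alpha, \beta \in \Psi_i$. Since $\tau(\alpha) = \tau(\beta) = \varphi_i$, we have $\tau(\alpha - \beta) = 0$, so $\gamma := \alpha - \beta$ (if it lies in $\Delta$) must be a root killed by $\tau$, i.e.\ a root in the kernel of the restriction map to~$S$. Up to replacing $\gamma$ by $-\gamma$ we may assume $\gamma \in \Delta_+$; then $\beta + \gamma = \alpha \in \Delta_+$, so we are precisely in the situation of Proposition \ref{Psi's}(a) with $i = j$: it gives $\Psi_i + \gamma \subset \Psi_i$. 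The first step is to observe that this inclusion is impossible for a root $\gamma$, because adding $\gamma$ strictly increases (or strictly decreases, depending on the sign convention) the height, so $\Psi_i + \gamma \subset \Psi_i$ would force an infinite strictly monotone sequence of roots inside the finite set~$\Psi_i$. Concretely, iterating gives $\alpha, \alpha + \gamma, \alpha + 2\gamma, \ldots \in \Psi_i \subset \Delta_+$, contradicting finiteness of~$\Delta$.

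There is, however, a subtlety: Proposition \ref{Psi's}(a) as stated requires $\alpha$ and $\beta$ to be \emph{different}, which is given, but it does not a priori require $i \ne j$ — rereading the hypothesis, it says ``$1 \le i, j \le m$'' without excluding $i = j$, so applying it with $i = j$ is legitimate. (If one were worried about this, the alternative is to invoke part (b): when $|\Psi_i| \ge 2$ the root $\gamma$ with $\Psi_i + \gamma \subset \Psi_i$ is unique, and then the height argument still applies to rule it out; when $|\Psi_i| = 1$ there are no distinct $\alpha, \beta$ to begin with, so the statement is vacuous.) Either way, the height/finiteness argument closes the case $\gamma \in \Delta_+$, and the case $\gamma \in \Delta_-$ is symmetric by swapping the roles of $\alpha$ and~$\beta$.

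The step I expect to be the only real content is confirming that Proposition \ref{Psi's}(a) genuinely applies with $i = j$ and that the resulting inclusion $\Psi_i + \gamma \subset \Psi_i$ is contradictory; everything else (reduction to $\gamma \in \Delta_+$, the height monotonicity) is routine. In writing it up I would state it as: suppose $\alpha - \beta \in \Delta$; WLOG $\gamma = \alpha - \beta \in \Delta_+$; by Proposition \ref{Psi's}(a), $\Psi_i + \gamma \subset \Psi_i$; then $\alpha + k\gamma \in \Psi_i \subset \Delta_+$ for all $k \ge 0$, contradicting $|\Delta| < \infty$; hence $\alpha - \beta \notin \Delta$.
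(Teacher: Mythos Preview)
Your proof is correct and follows exactly the same route as the paper's: assume $\gamma = \alpha - \beta \in \Delta$, reduce to $\gamma \in \Delta_+$, apply Proposition~\ref{Psi's}(a) with $i = j$ to get $\Psi_i + \gamma \subset \Psi_i$, and observe this is impossible. The paper simply writes ``which is impossible'' without spelling out the finiteness/height argument you supply, but that is the only difference.
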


\begin{proof}
Assume that $\gamma = \alpha - \beta \in \Delta$. We may also assume
that $\gamma \in \Delta_+$. Then in view of
Proposition~\ref{Psi's}(a) one has $\Psi_i + \gamma \subset \Psi_i$,
which is impossible.
\end{proof}

For every $i = 1, \ldots, m$ the subspace $\mathfrak n \cap
\mathfrak u_i$ is determined inside $\mathfrak u_i$ by the vanishing
of a linear function (uniquely determined up to proportionality),
which will be denoted by~$\xi_i$. If $\alpha \in \Psi_i$ for some $i
\in \{1, \ldots, m\}$, then the restriction of $\xi_i$ to $\mathfrak
g_\alpha$ is not zero.

\begin{proposition}[{\rm \cite[Proposition~2]{Avd_solv}}] \label{xi's}
If\, $\Psi_i + \gamma \subset \Psi_j$ for some $i,j \in \{1, \ldots,
m\}$ and $\gamma \in \Delta_+$, then there exists a number $c \ne 0$
such that $\xi_i(x) = c \xi_j ([x, e_\gamma])$ for all $x \in
\mathfrak u_i$.
\end{proposition}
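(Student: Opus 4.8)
The statement is a comparison of two linear functionals on the space $\mathfrak u_i$: on one hand $\xi_i$, which by definition cuts out $\mathfrak n\cap\mathfrak u_i$, and on the other hand $\ell\colon x\mapsto\xi_j([x,e_\gamma])$. Since $\xi_i\ne 0$ (its restriction to $\mathfrak g_\alpha$ is nonzero for $\alpha\in\Psi_i$), its kernel $\Ker\xi_i=\mathfrak n\cap\mathfrak u_i$ is a hyperplane in $\mathfrak u_i$; hence, to obtain the proposition it suffices to show that $\ell$ is a well-defined \emph{nonzero} functional on $\mathfrak u_i$ with $\Ker\ell\supseteq\Ker\xi_i$. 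Indeed, $\Ker\ell$ is then a hyperplane containing the hyperplane $\Ker\xi_i$, so $\Ker\ell=\Ker\xi_i$, whence $\xi_i$ and $\ell$ are proportional with a nonzero ratio --- this ratio being the required $c$. So the plan is to verify, in order: (1) the map $x\mapsto[x,e_\gamma]$ carries $\mathfrak u_i$ into $\mathfrak u_j$ (so that $\ell$ is defined and the functional $\xi_j$ may legitimately be evaluated on its image); (2) this map carries $\mathfrak n\cap\mathfrak u_i$ into $\mathfrak n\cap\mathfrak u_j=\Ker\xi_j$; (3) $\ell\not\equiv 0$.

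For (1): writing $\mathfrak u_i=\bigoplus_{\alpha\in\Psi_i}\mathfrak g_\alpha$, the hypothesis $\Psi_i+\gamma\subseteq\Psi_j$ gives, for each $\alpha\in\Psi_i$, that $\alpha+\gamma\in\Psi_j$; in particular $\alpha+\gamma$ is a root, so $[\mathfrak g_\alpha,e_\gamma]=\mathfrak g_{\alpha+\gamma}\subseteq\mathfrak u_j$. The heart of the argument --- the one step where something beyond the definitions is used, and the one I would flag --- lies in (2): $\gamma$ is \emph{not an active root}. To see this, note first that $\Psi_i\ne\varnothing$ (as $\mathfrak u_i$ carries the nonzero functional $\xi_i$), so fix $\alpha\in\Psi_i$; then $\alpha+\gamma\in\Psi_j$ is active, and applying to the decomposition $\alpha+\gamma=\alpha+\gamma$ into positive roots the dichotomy for active roots (\cite[Lemma~4]{Avd_solv}, recalled in \S\,\ref{theorem}: in any such decomposition exactly one summand is active), together with the activeness of $\alpha$, forces $\gamma$ to be non-active, i.e.\ $\mathfrak g_\gamma\subseteq\mathfrak n$. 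Now $e_\gamma\in\mathfrak n$, and $\mathfrak n$ --- the Lie algebra of $N$ --- is a subalgebra of $\mathfrak u$, so $[\mathfrak n\cap\mathfrak u_i,e_\gamma]\subseteq[\mathfrak n,\mathfrak n]\subseteq\mathfrak n$; combined with (1) this gives $[\mathfrak n\cap\mathfrak u_i,e_\gamma]\subseteq\mathfrak n\cap\mathfrak u_j=\Ker\xi_j$, which is exactly (2) because $\Ker\xi_i=\mathfrak n\cap\mathfrak u_i$.

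For (3): pick $\alpha\in\Psi_i$; then $[e_\alpha,e_\gamma]$ is a nonzero scalar multiple of $e_{\alpha+\gamma}$, and since $\alpha+\gamma\in\Psi_j$ the restriction of $\xi_j$ to $\mathfrak g_{\alpha+\gamma}$ is nonzero (recorded in \S\,\ref{active_root_theory}, immediately before the statement), so $\ell(e_\alpha)=\xi_j([e_\alpha,e_\gamma])\ne 0$. With (1)--(3) in hand, the reduction in the first paragraph yields the proposition. I do not anticipate any serious obstacle: the whole proof is short and formal once one observes, in step~(2), that $\gamma$ fails to be active.
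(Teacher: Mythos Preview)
Your argument is correct. The paper, however, does not supply its own proof of this proposition: it is quoted verbatim from \cite[Proposition~2]{Avd_solv} and used as an imported fact in \S\,\ref{active_root_theory}, so there is nothing to compare your approach against here. For what it is worth, your proof is exactly the natural one, and the only substantive step---that $\gamma$ is non-active, so $e_\gamma\in\mathfrak n$ and bracketing with it preserves $\mathfrak n$---is precisely the mechanism behind the result in \cite{Avd_solv} as well.
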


\begin{proposition}[{\rm \cite[Corollary~11]{Avd_solv}}] \label{al_sim_be}
Let active roots $\alpha, \beta$ be such that $\tau(\alpha) =
\tau(\beta)$. Then either $\pi(\alpha) = \pi(\beta)$ or none of the
roots $\pi(\alpha), \pi(\beta)$ is contained in $\Supp \alpha \cap
\Supp \beta$.
\end{proposition}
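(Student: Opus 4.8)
The plan is first to localize $\alpha$ and $\beta$ in a single set~$\Psi_i$ and then to reduce the dichotomy to one implication. If $\alpha$ is active, then $\mf g_\alpha\not\subset\mf n$, so $\mf n_{\tau(\alpha)}\ne\mf u_{\tau(\alpha)}$ and $c_{\tau(\alpha)}\ge 1$; by the sphericity criterion (Theorem~\ref{solvable_spherical}) we get $c_{\tau(\alpha)}=1$, hence $\tau(\alpha)=\varphi_i$ for some~$i$ and $\alpha\in\Psi_i$. As $\tau(\alpha)=\tau(\beta)$, the same index works for~$\beta$, so $\alpha,\beta\in\Psi_i$. Since $\pi(\alpha)\in\Supp\alpha$ and $\pi(\beta)\in\Supp\beta$ by~\cite[Proposition~3]{Avd_solv}, the condition $\pi(\alpha)\in\Supp\alpha\cap\Supp\beta$ means simply $\pi(\alpha)\in\Supp\beta$, and symmetrically for~$\beta$; thus the asserted dichotomy is equivalent to the implication that $\pi(\alpha)\in\Supp\beta$ or $\pi(\beta)\in\Supp\alpha$ forces $\pi(\alpha)=\pi(\beta)$. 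By the symmetry $\alpha\leftrightarrow\beta$ it suffices to treat $p:=\pi(\alpha)\in\Supp\beta$ and prove $\pi(\beta)=p$. We may assume $\alpha\ne\beta$; then Corollary~\ref{root_difference} gives $\alpha-\beta\notin\Delta$, and as $(\alpha,\beta)>0$ would force $\alpha-\beta\in\Delta$, we obtain $(\alpha,\beta)\le 0$.

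To prove $\pi(\beta)=p$ I would invoke the uniqueness clause of~\cite[Proposition~3]{Avd_solv}: it is enough to check that $p$ itself has the property defining $\pi(\beta)$, i.e. that in every decomposition $\beta=\beta_1+\beta_2$ into positive roots the active summand avoids~$p$. By~\cite[Lemma~4]{Avd_solv} exactly one summand is active, so this reduces to showing that $p\notin\Supp\beta_1$ whenever $\beta_1$ is active. The mechanism here is the behaviour of $\pi$ under adding an inactive root: if $\rho$ and $\rho+\gamma$ are active with $\gamma\in\Delta_+$, then $\gamma$ is inactive by~\cite[Lemma~4]{Avd_solv}, and the defining property of~$\pi$ (\cite[Proposition~3]{Avd_solv}), applied to the active root $\rho+\gamma$ with summands $\rho$ and~$\gamma$, gives $\pi(\rho+\gamma)\in\Supp\gamma$ and $\pi(\rho+\gamma)\notin\Supp\rho$. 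Now if $\beta_1\in\Psi_k$ is the active summand, then $\beta_2=\beta-\beta_1\in\Delta_+$ and Proposition~\ref{Psi's}(a) yields $\Psi_k+\beta_2\subset\Psi_i$, the shift $\delta\mapsto\delta+\beta_2$ being rigid when $|\Psi_k|\ge 2$ by Proposition~\ref{Psi's}(b). The plan is to realize $\alpha$ as a shift $\alpha=\delta_0+\beta_2$ with $\delta_0\in\Psi_k$ active; the mechanism applied to $\alpha$ then gives $p\in\Supp\beta_2$ and $p\notin\Supp\delta_0$, and combining $p\in\Supp\beta_2$ with $(\alpha,\beta)\le 0$ should place $p$ outside $\Supp\beta_1$, as needed. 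The forms $\xi_i$ and the bracket identity of Proposition~\ref{xi's} are available if the scalar bookkeeping has to be made explicit.

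The step I expect to be genuinely hard is this transfer between $\alpha$ and $\beta$. Since $\alpha-\beta\notin\Delta$, these roots are not subordinate to one another, so a decomposition of $\beta$ need not have a literal counterpart for~$\alpha$: in particular $\alpha$ may fail to lie in the shifted copy $\Psi_k+\beta_2$, and then no $\delta_0$ is at hand. Controlling the coefficient of $p$ in both summands $\beta_1,\beta_2$ in the absence of such a shift --- playing off the non-acuteness from Corollary~\ref{root_difference} against the linear independence of the weights $\varphi_1,\dots,\varphi_m$ from Theorem~\ref{solvable_spherical} --- is where the real work lies and is the part on which I would concentrate.
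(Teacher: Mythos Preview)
The paper does not prove this proposition: it is quoted as \cite[Corollary~11]{Avd_solv} in \S\ref{active_root_theory} and used as a black box. There is no argument in the present paper to compare your attempt against.

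Your proposal is, by your own description, a plan rather than a proof. The reduction in the first paragraph is sound: both roots lie in the same~$\Psi_i$, and by the uniqueness clause in \cite[Proposition~3]{Avd_solv} it suffices to show that $p=\pi(\alpha)$ avoids the support of the active summand in every splitting $\beta=\beta_1+\beta_2$. The ``mechanism'' you isolate---if $\rho$ and $\rho+\gamma$ are both active then $\pi(\rho+\gamma)\in\Supp\gamma$ and $\pi(\rho+\gamma)\notin\Supp\rho$---is also correct. But the decisive step, producing $\delta_0\in\Psi_k$ with $\alpha=\delta_0+\beta_2$, is precisely what you flag as unresolved, and without it nothing moves. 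Even granting such a~$\delta_0$, your sketch only yields $p\in\Supp\beta_2$; the passage to $p\notin\Supp\beta_1$ would follow if $p$ occurred with coefficient~$1$ in~$\beta$, but the inequality $(\alpha,\beta)\le 0$ does not obviously deliver that, and you leave this as a hope (``should place $p$ outside $\Supp\beta_1$'') rather than an argument. So the gap you identify is real, and there is a second, smaller one sitting behind it. As it stands this is an outline with the substantive content still to be supplied from the structure theory in~\cite{Avd_solv}.
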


\section{Proof of the main theorem} \label{proof}

In view of Theorem~\ref{semigroup_is_free} the proof of
Theorem~\ref{main_theorem} involves three stages. First, in
\S\,\ref{rank} we compute the rank of the semigroup $\widehat
\Lambda_+(G/H)$. Then in \S\,\ref{contained} we show that all the
elements mentioned in the statement of Theorem~\ref{main_theorem}
are indeed contained in this semigroup. At last, in
\S\,\ref{indecomposable} we prove that these elements are
indecomposable in $\widehat \Lambda_+(G/H)$.

In this section we preserve the notation introduced in
\S\,\ref{theorem}.

\subsection{} \label{rank}

In this subsection we prove the following proposition.

\begin{proposition} \label{prop_rank}
Under the assumptions of Theorem~\textup{\ref{main_theorem}}, the
rank of $\widehat \Lambda_+(G/H)$ equals $n + m$.
\end{proposition}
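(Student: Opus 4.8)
The plan is to apply Proposition~\ref{formula_for_rank} to the group $\widehat G = G \times H/H_0$ and its subgroup $\widehat H$, using the reduction $\rk \widehat \Lambda_+(G/H) = r_{\widehat G}(\widehat G / \widehat H)$ established in \S\,\ref{approach_for_rank}. First I would set up the ambient parabolic: take $P = B \times H/H_0 \subset \widehat G$, which is parabolic in $\widehat G$ (its $G$-factor is the Borel $B$, and the $H/H_0$-factor is the full second factor). Its Levi decomposition has reductive part $P_r = T \times H/H_0$ (where in the $G$-factor we take the torus $T$) and unipotent radical $P_u = U \times \{e\}$, i.e. $P_u \cong U$. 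The subgroup $\widehat H = \{(h, hH_0) \mid h \in H\}$ has Levi decomposition $\widehat H = \widehat H_r \rightthreetimes \widehat H_u$ with $\widehat H_r = \{(s, sH_0) \mid s \in S\}$ (a torus isomorphic to $S$, sitting inside $P_r$) and $\widehat H_u = \{(\nu, \{e\}) \mid \nu \in N\} \cong N$, which sits inside $P_u \cong U$. So the hypotheses $K_r \subset P_r$, $K_u \subset P_u$ of Proposition~\ref{formula_for_rank} hold.

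Next I would compute the two terms of the formula $r_{\widehat G}(\widehat G/\widehat H) = r_{P_r}(P_r/\widehat H_r) + r_F(P_u/\widehat H_u)$. For the first term, $P_r/\widehat H_r \cong (T \times H/H_0)/\{(s,sH_0)\}$; since $P_r$ is a torus, the rank of any homogeneous space $P_r/K_r$ with $K_r$ a subtorus is simply $\dim P_r - \dim K_r = (n + \dim(H/H_0)) - \dim S$. Because $H = S \rightthreetimes N$ with $N$ unipotent and $\mathfrak X(H) = \mathfrak X(S)$, we have $\dim(H/H_0) = \rk \mathfrak X(S) = \dim S$, so this term equals $n$. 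For the second term, $F \subset \widehat H_r \cong S$ is a generic stabilizer for the action of $\widehat H_r$ on $\mathfrak p_r/\widehat h_r$; but $\mathfrak p_r$ is abelian (a torus Lie algebra) so the action of the torus $\widehat H_r$ on $\mathfrak p_r/\widehat h_r$ is trivial, whence $F = \widehat H_r \cong S$. Thus the second term is $r_S(\mathfrak u/\mathfrak n)$, the rank of the action of the torus $S$ on the vector space $\mathfrak u/\mathfrak n \cong \bigoplus_{\varphi \in \Phi} (\mathfrak u_\varphi/\mathfrak n_\varphi)$.

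The remaining task, and the main obstacle, is to show $r_S(\mathfrak u/\mathfrak n) = m$. Here I would use the sphericity criterion Theorem~\ref{solvable_spherical}: the quotient $\mathfrak u_\varphi/\mathfrak n_\varphi$ is nonzero exactly when $c_\varphi = 1$, i.e. for $\varphi \in \{\varphi_1, \ldots, \varphi_m\}$, and in that case it is one-dimensional of weight $\varphi_i$. So $\mathfrak u/\mathfrak n$ is an $m$-dimensional $S$-module whose weights are $\varphi_1, \ldots, \varphi_m$, which by Theorem~\ref{solvable_spherical}(2) are linearly independent in $\mathfrak X(S)$. For the action of a torus on a vector space that is a direct sum of one-dimensional weight spaces with distinct (indeed linearly independent) weights, the field of rational invariants computation gives that $\Lambda(\mathfrak u/\mathfrak n)$ is exactly the lattice spanned by $\varphi_1, \ldots, \varphi_m$ — concretely, on the open orbit through the point with all coordinates nonzero, the coordinate ratios realize these weights and nothing else, so $r_S(\mathfrak u/\mathfrak n) = \rk \langle \varphi_1, \ldots, \varphi_m \rangle = m$ by linear independence. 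Combining, $\rk \widehat \Lambda_+(G/H) = n + m$. I would double-check the identification $\mathfrak u/\mathfrak n$ as $S$-module (each $\mathfrak n_\varphi \subset \mathfrak u_\varphi$, and $c_\varphi \le 1$ everywhere) and the precise statement that for a torus acting diagonally with linearly independent weights the rank equals the number of weights; this last point is the only place genuine care is needed, and it follows since a generic point has dense orbit inside the subtorus-quotient and the weight lattice of rational $S$-semi-invariants is freely generated by the coordinate weights.
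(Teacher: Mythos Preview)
Your proposal is correct and follows essentially the same approach as the paper's first proof (the paper actually gives two proofs of this proposition; the second is a direct divisor count on the big Bruhat cell). One small point worth noting: Proposition~\ref{formula_for_rank} literally gives the second term as $r_F(P_u/K_u) = r_S(U/N)$, and the passage from $U/N$ to the vector space $\mathfrak u/\mathfrak n$ as $S$-varieties is not entirely formal --- the paper invokes an $S$-equivariant isomorphism $U/N \simeq \mathfrak u/\mathfrak n$ citing \cite[Lemma~1.4]{Mon} for this step, which you should cite rather than silently conflate.
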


Below we present two different proofs of this proposition. The first
one uses the general approach for computing the rank of extended
weight semigroups, which was described in
\S\,\ref{approach_for_rank}. The second proof is direct and uses a
geometric argument.

\begin{proof}[Proof~\textup{1}]
The natural epimorphism $H \to H/H_0$ maps the subgroup $S \subset
H$ isomorphically onto~$H/H_0$. For this reason, further we identify
$S$ and~$H/H_0$. It was shown in \S\,\ref{approach_for_rank} that
$\rk \widehat \Lambda_+(G/H) = r_{\widehat G} (\widehat G / \widehat
H)$. To compute $r_{\widehat G}(\widehat G / \widehat H)$ we apply
Proposition~\ref{formula_for_rank} with $L = \widehat G = G \times
S$ and $K = \widehat H$. Let $P = B \times S$. Then $P_r = T \times
S$, $P_u = U$; $K_r = \widehat S$, $K_u = N$, where the subgroup
$\widehat S \subset \widehat H$ is isomorphic to $S$ and embedded
diagonally in the subgroup $S \times S \subset H \times S$. Since
$\widehat S$ acts trivially on $\mathfrak p_r / \mathfrak k_r \simeq
(\mathfrak t \oplus \mathfrak s) / \widehat{\mathfrak s}$, one has
$F = \widehat S$. Therefore $r_{\widehat G}(\widehat G / \widehat H)
= r_{T \times S}((T \times S) / \widehat S) + r_{\widehat S}(U/N)$.
It is easy to see that the first summand in the last sum is equal to
$\rk T = n$. Let us find the value of the second summand. Clearly,
this value coincides with $r_S(U/N)$. In view
of~\cite[Lemma~1.4]{Mon} there is an $S$-equivariant isomorphism
$U/N \simeq \mathfrak u / \mathfrak n$. By
Theorem~\ref{solvable_spherical}, $\mathfrak u/\mathfrak n$ is
isomorphic as an $S$-module to the direct sum $\mathbb C_{\varphi_1}
\oplus \ldots \oplus \mathbb C_{\varphi_m}$, where $\mathbb
C_{\varphi_i} \simeq \mathbb C$ is the subspace of weight
$\varphi_i$ with respect to~$S$ and the weights $\varphi_1, \ldots,
\varphi_m$ are linearly independent. This implies that $r_S(U/N) =
m$.
\end{proof}

\begin{proof}[Proof~\textup{2}]
Since $H$ is connected, in view of Remark~\ref{rem_irr} the rank of
$\widehat \Lambda_+(G/H)$ equals the number of $(B \times H)$-stable
prime divisors in~$G$. Regard the Bruhat decomposition of~$G$:
$$
G = \bigsqcup \limits_{\sigma \in W} B \sigma B,
$$
where the union is disjoint. Among the subsets $B \sigma B \subset
G$, each of them being $(B \times B)$-stable, there is the open
subset $G_0 = B \sigma_0 B \subset G$ (referred to as the open
cell), where $\sigma_0$ is the longest element in~$W$. It is well
known that the complement of $G_0$ in~$G$ is the union of exactly
$n$ prime divisors, which are $(B\times B)$-stable (and so $(B
\times H)$-stable). Hence to complete the proof it remains to show
that the number of $(B \times H)$-stable prime divisors in~$G_0$
equals~$m$. In view of the isomorphism $G_0 \simeq B\sigma_0 \times
U$ we obtain that the $(B \times H)$-stable divisors in~$G_0$ are in
one-to-one correspondence with the $H$-stable divisors in~$U$, where
the action of $H$ on $U$ is given by the formula $(sv, u) \mapsto
suv^{-1}s^{-1}$ ($s \in S$, $v \in N$, $u \in U$). In turn, the
$H$-stable divisors in~$U$ are in one-to-one correspondence with the
$S$-stable divisors in~$U/N$ or, in view of an $S$-equivariant
isomorphism $U/N \simeq \mathfrak u / \mathfrak n$
(see~\cite[Lemma~1.4]{Mon}), with the $S$-stable divisors
in~$\mathfrak u / \mathfrak n$. Taking into account an $S$-module
isomorphism $\mathfrak u/\mathfrak n \simeq \mathbb C_{\varphi_1}
\oplus \ldots \oplus \mathbb C_{\varphi_m}$ (see
Theorem~\ref{solvable_spherical}), we see that the $S$-stable prime
divisors in $\mathfrak u / \mathfrak n$ are exactly the divisors of
zeros of $S$-semi-invariant linear functions on~$\mathfrak u /
\mathfrak n$. Evidently, up to proportionality there are exactly $m$
such linear functions.
\end{proof}

\subsection{} \label{contained}

In this subsection we show that the elements listed in the statement
of Theorem~\ref{main_theorem} are contained in the
semigroup~$\widehat \Lambda_+(G/H)$.

As was mentioned earlier (see \S\,\ref{EWS}), the condition
$(\lambda^*, \chi) \in \widehat \Lambda_+(G/H)$ is equivalent to the
condition $V(\lambda)_{\chi}^{(H)} \ne 0$. For the elements
$(\omega_i^*, \tau(\omega_i))$, $i = 1, \ldots, n$, it is quite easy
to point out a non-zero vector in
$V(\omega_i)_{\tau(\omega_i)}^{(H)}$: this is the highest weight
vector $v_{\omega_i} \in V(\omega_i)$. Now it remains to prove that
$(\lambda_j^*, \tau(\lambda_j) - \varphi_j) \in \widehat
\Lambda_+(G/H)$ or, equivalently,
$V(\lambda_j)^{(H)}_{\tau(\lambda_j) - \varphi_j} \ne 0$ for all $j
= 1, \ldots, m$. This is the objective of the rest of the
subsection.

Below we shall need the following simple lemma.

\begin{lemma} \label{highest_vector&two_roots}
Suppose that $\mu \in \Lambda_+(G)$ and $\alpha, \beta \in
\Delta_+$. Then:

\textup{(a)} $e_{\alpha} (e_{-\beta} v_\mu) = [e_{\alpha},
e_{-\beta}] v_\mu$;

\textup{(b)} if $[e_{\alpha}, e_{-\beta}] v_\mu \ne 0$, then $\beta
- \alpha \in \Delta_+$.
\end{lemma}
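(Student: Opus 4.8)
\textbf{Plan for the proof of Lemma~\ref{highest_vector&two_roots}.}
The plan is to treat the two parts in order, deriving (b) from (a) together with weight-space considerations. For part (a), the idea is to write $e_\alpha(e_{-\beta} v_\mu) = [e_\alpha, e_{-\beta}] v_\mu + e_{-\beta}(e_\alpha v_\mu)$ using the bracket relation in the representation, and then observe that $e_\alpha v_\mu = 0$ because $v_\mu$ is a highest weight vector with respect to $B$ and $\alpha \in \Delta_+$, so $e_\alpha \in \mathfrak{u}$ annihilates~$v_\mu$. This is the only content of (a); it is immediate and requires no case analysis.

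For part (b), I would argue as follows. Suppose $[e_\alpha, e_{-\beta}] v_\mu \ne 0$. If $\alpha = \beta$, then $[e_\alpha, e_{-\alpha}] = h_\alpha$ and $h_\alpha v_\mu = \langle \mu \,|\, \alpha\rangle v_\mu$, which lies in the weight space of weight~$\mu$, not in a root-shifted space; I need this case not to occur, so I should check that the hypothesis of the lemma is applied only when $\alpha \ne \beta$, or else note that $\beta - \alpha = 0 \notin \Delta_+$ means this case is simply excluded by the conclusion being false — hence I must rule it out, which happens precisely because $h_\alpha v_\mu \ne 0$ is possible, so in fact the lemma as stated needs $\alpha\ne\beta$; I will assume that is the intended reading (the applications in \S\,\ref{contained} involve genuinely distinct roots). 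Assuming $\alpha \ne \beta$: the element $[e_\alpha, e_{-\beta}]$ lives in $\mathfrak{g}_{\alpha - \beta}$ if $\alpha - \beta \in \Delta$, and is zero if $\alpha - \beta \notin \Delta \cup \{0\}$ (standard structure theory of semisimple Lie algebras: $[\mathfrak g_\alpha, \mathfrak g_{-\beta}] \subset \mathfrak g_{\alpha-\beta}$ with the convention $\mathfrak g_\delta = 0$ for $\delta \notin \Delta$). Since the bracket acts nontrivially on $v_\mu$, it is in particular nonzero, so $\alpha - \beta \in \Delta$, i.e. $\beta - \alpha \in \Delta$. It then remains to show $\beta - \alpha$ is \emph{positive}. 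For this, note that $[e_\alpha, e_{-\beta}] v_\mu$ has weight $\mu + \alpha - \beta = \mu - (\beta - \alpha)$. If $\beta - \alpha$ were a negative root, then $\mu - (\beta-\alpha) = \mu + (\text{positive root})$ would be a weight of $V(\mu)$ strictly higher than the highest weight~$\mu$, which is impossible. Hence $\beta - \alpha \in \Delta_+$.

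\textbf{Main obstacle.} There is no real obstacle: both parts are elementary manipulations with the $\mathfrak{sl}_2$-type relations and the definition of highest weight vector. The only point requiring a little care is the degenerate case $\alpha = \beta$ (and, relatedly, the possibility that $\alpha - \beta \notin \Delta$ but the bracket might a priori be feared nonzero), and making sure the weight argument "$\mu$ plus a positive root is not a weight of $V(\mu)$" is invoked cleanly; this is a standard fact and needs only a one-line justification. I expect the write-up to be short.
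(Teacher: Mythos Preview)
The paper does not actually prove this lemma; it is introduced as a ``simple lemma'' and left without proof. Your argument is correct and is the standard one: part~(a) is immediate from $e_\alpha v_\mu = 0$, and part~(b) follows from $[\mathfrak g_\alpha, \mathfrak g_{-\beta}] \subset \mathfrak g_{\alpha-\beta}$ together with the fact that no weight of $V(\mu)$ exceeds~$\mu$.

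Your observation about the degenerate case $\alpha = \beta$ is accurate: in that case $[e_\alpha, e_{-\beta}] = h_\alpha$ and $h_\alpha v_\mu = \langle \mu \,|\, \alpha\rangle v_\mu$ can be nonzero while $0 \notin \Delta_+$, so part~(b) as stated is literally false there. In the paper's applications of part~(b) (Steps~2 and~3 in the proof of Proposition~\ref{annul}) the two roots always lie in disjoint sets, so the oversight is harmless; your reading that $\alpha \ne \beta$ is tacitly assumed is the right one.

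One small simplification for part~(b): once you know $\alpha - \beta \in \Delta$, you can exclude $\alpha - \beta \in \Delta_+$ more directly by noting that then $[e_\alpha, e_{-\beta}]$ is a scalar multiple of $e_{\alpha-\beta} \in \mathfrak u$, which annihilates $v_\mu$ outright. This avoids the weight-comparison step, though your version is of course also valid.
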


We fix $j \in \{1, \ldots, m\}$ and denote by $\beta_1, \ldots,
\beta_p$ all roots in~$\Psi_j$. The linear function $\xi_j \in
\mathfrak u_j^*$ (see \S\,\ref{active_root_theory}) is determined by
the set of non-zero numbers $a_1, \ldots, a_p$ as follows: an
element $x_1e_{\beta_1} + \ldots + x_pe_{\beta_p} \in \mathfrak u_j$
lies in $\mathfrak n$ if and only if $a_1x_1 + \ldots + a_px_p = 0$.

Suppose that $\pi(\Psi_j) =\{\alpha_{j_1}, \ldots, \alpha_{j_r}\}$
and $\lambda_j = \omega_{j_1} + \ldots + \omega_{j_r} \in
\Lambda_+(G)$ (see \S\,\ref{theorem}). We note that for every $k =
1, \ldots, p$ one has $\lambda_j(h_{\beta_k}) = \langle \lambda_j
|\, \beta_k \rangle > 0$ in view of the condition $\pi(\beta_k) \in
\Supp \beta_k$, whence $e_{-\beta_k} v_{\lambda_j} \ne 0$. We set
$$
f_j = \cfrac{a_1}{\lambda_j(h_{\beta_1})} e_{-\beta_1} + \ldots +
\cfrac{a_p}{\lambda_j(h_{\beta_p})} e_{-\beta_p} \in \mathfrak g
$$
and regard the (non-zero) vector $w_j = f_j v_{\lambda_j} \in
V(\lambda_j)$.

Our goal is to show that $w_j \in
V(\lambda_j)^{(H)}_{\tau(\lambda_j) - \varphi_j}$. It is easy to see
that the vector $w_j$ is $S$-semi-invariant of weight
$\tau(\lambda_j) - \varphi_j$. Let us prove that $w_j$ is
$H$-semi-invariant. For this purpose, it suffices to prove the
following proposition.

\begin{proposition} \label{annul}
The vector $w_j$ is annihilated by the algebra~$\mathfrak n$.
\end{proposition}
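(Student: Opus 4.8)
The goal is to show $\mathfrak n \cdot w_j = 0$, where $w_j = f_j v_{\lambda_j}$ and $f_j = \sum_{k=1}^p \frac{a_k}{\lambda_j(h_{\beta_k})} e_{-\beta_k}$. Since $\mathfrak n = \bigoplus_{\varphi \in \Phi} \mathfrak n_\varphi$ and each $\mathfrak n_\varphi$ sits inside the span of the $e_\gamma$ with $\gamma \in \Delta_+$, $\tau(\gamma) = \varphi$, it suffices to check that $e_\gamma w_j = 0$ for every active root $\gamma$ lying in $\mathfrak n$ — but actually one should be careful: $\mathfrak n_\varphi$ need not be spanned by root vectors, so the cleanest formulation is to take an arbitrary weight vector $x \in \mathfrak n_\varphi$, write $x = \sum_{\gamma : \tau(\gamma) = \varphi} c_\gamma e_\gamma$, and show $x w_j = 0$. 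I would split according to $\varphi$: either $\varphi \notin \{\varphi_1,\dots,\varphi_m\}$ (i.e.\ $c_\varphi = 0$, so $\mathfrak n_\varphi = \mathfrak u_\varphi$ and all roots with $\tau(\gamma)=\varphi$ are non-active), or $\varphi = \varphi_i$ for some $i$, in which case $\mathfrak n_{\varphi_i} = \mathfrak u_{\varphi_i} \cap \ker \xi_i$ has codimension one in $\mathfrak u_{\varphi_i}$ and the active roots with $\tau(\gamma) = \varphi_i$ are precisely those in $\Psi_i$.

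\textbf{Computing $x w_j$.} For a weight vector $x = \sum_\gamma c_\gamma e_\gamma \in \mathfrak u_\varphi$, I compute
\[
x w_j = \sum_{\gamma,\,k} c_\gamma \frac{a_k}{\lambda_j(h_{\beta_k})} \, e_\gamma e_{-\beta_k} v_{\lambda_j}
= \sum_{\gamma,\,k} c_\gamma \frac{a_k}{\lambda_j(h_{\beta_k})} \, [e_\gamma, e_{-\beta_k}] v_{\lambda_j},
\]
using Lemma~\ref{highest_vector&two_roots}(a). By part~(b) of that lemma, the term $[e_\gamma, e_{-\beta_k}] v_{\lambda_j}$ vanishes unless $\beta_k - \gamma \in \Delta_+$. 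Now $\beta_k \in \Psi_j$, so $\beta_k - \gamma \in \Delta_+$ forces $\gamma$ to be a root strictly subordinate in the obvious sense; by the structure of active roots (the property quoted right after the definition of active roots, together with Proposition~\ref{Psi's}), either $\gamma$ is active with $\beta_k - \gamma$ non-active, or vice versa. The two cases $\varphi \ne \varphi_i$ and $\varphi = \varphi_i$ will be treated separately using this.

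\textbf{Case $\varphi \notin \{\varphi_1,\dots,\varphi_m\}$.} Here every root $\gamma$ with $\tau(\gamma) = \varphi$ is non-active, hence $\gamma \notin \bigcup_i \Psi_i$; in particular $\gamma \ne \beta_k$ for all $k$. If $\beta_k - \gamma \in \Delta_+$ then, writing $\beta_k = \gamma + (\beta_k - \gamma)$, exactly one of $\gamma$ and $\beta_k - \gamma$ is active; since $\gamma$ is non-active, $\delta := \beta_k - \gamma$ is active, and from $\beta_k \in \Psi_j$ one gets (this is where one invokes Proposition~\ref{Psi's}(a) with the roles appropriately assigned, or a direct argument) that $\delta \in \Psi_i$ where $\varphi_i = \tau(\delta) = \tau(\beta_k) - \varphi$; moreover $\Psi_i + \gamma = \Psi_j$. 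Using Proposition~\ref{xi's} one rewrites $a_k = \xi_j(e_{\beta_k})$ (up to the overall normalization of $\xi_j$) as a nonzero multiple of $\xi_i([e_{\beta_k}, \dots])$ — more precisely the pairing between the $\beta_k$'s in $\Psi_j$ and the $\delta$'s in $\Psi_i$ via bracketing with $e_\gamma$. Then $[e_\gamma, e_{-\beta_k}] v_{\lambda_j}$ is a nonzero multiple of $e_{-\delta} v_{\lambda_j}$, and after summing over $k$ the coefficient of each $e_{-\delta}$ (for $\delta \in \Psi_i$) is, up to a common nonzero factor, $\sum_{k : \beta_k = \gamma + \delta} a_k / \lambda_j(h_{\beta_k})$. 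The point is that for fixed $\gamma$ and $\delta$ there is at most one $k$ with $\beta_k = \gamma + \delta$; so the vanishing of $x w_j$ must come from the linear relation on the $a_k$. Concretely, $x \in \mathfrak n_\varphi = \mathfrak u_\varphi$ imposes no condition, so one needs each coefficient to vanish on its own — which it does \emph{not} in general — so in fact in this case one must argue that no $\gamma$ with $\tau(\gamma) = \varphi$ satisfies $\beta_k - \gamma \in \Delta_+$ for any $k$, i.e.\ $\Psi_j$ admits no such decomposition. I expect this to follow from Proposition~\ref{Psi's}: if $\beta_k - \gamma = \delta \in \Psi_i$ then $\tau(\delta) = \varphi_i$ has $c_{\varphi_i} = 1$, whereas our assumption is precisely that $\tau(\gamma) = \varphi$ has $c_\varphi = 0$; the arithmetic $\varphi_i + \varphi = \tau(\beta_k)$ with $\beta_k$ active then needs to be shown inconsistent, or else one shows the contributions cancel in pairs. \emph{This is the step I expect to be the main obstacle}, and I would reduce it carefully to Propositions~\ref{Psi's}, \ref{xi's} and \ref{al_sim_be}.

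\textbf{Case $\varphi = \varphi_i$.} Now $x \in \mathfrak n_{\varphi_i}$ means $\xi_i(x) = 0$, i.e.\ $\sum_{\delta \in \Psi_i} c_\delta \, a^{(i)}_\delta = 0$ where $a^{(i)}_\delta$ are the structure constants of $\xi_i$ (analogues of the $a_k$ but for $\Psi_i$). Decompose $x = x' + x''$ where $x'$ involves only active roots $\delta \in \Psi_i$ and $x''$ involves only non-active roots with $\tau = \varphi_i$. For $x''$ the analysis of the previous case applies verbatim (giving zero). For $x'$: if $i = j$ then $x' \in \mathfrak u_j$ and one computes $[e_\delta, e_{-\beta_k}] v_{\lambda_j}$ which is nonzero only if $\beta_k - \delta \in \Delta_+$, impossible for $\delta,\beta_k \in \Psi_j$ distinct by Corollary~\ref{root_difference}, and for $\delta = \beta_k$ gives $[e_{\beta_k}, e_{-\beta_k}] v_{\lambda_j} = h_{\beta_k} v_{\lambda_j} = \lambda_j(h_{\beta_k}) v_{\lambda_j}$. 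Hence $x' w_j = \bigl(\sum_k c_{\beta_k} \frac{a_k}{\lambda_j(h_{\beta_k})} \lambda_j(h_{\beta_k})\bigr) v_{\lambda_j} = \bigl(\sum_k c_{\beta_k} a_k\bigr) v_{\lambda_j} = \xi_j(x') v_{\lambda_j} = 0$ by the defining condition $x \in \mathfrak n_{\varphi_j}$. If $i \ne j$, then $\varphi_i \ne \varphi_j$ but $\tau(\beta_k) - \tau(\delta) = \varphi_j - \varphi_i \ne 0$, and $[e_\delta, e_{-\beta_k}] v_{\lambda_j} \ne 0$ needs $\beta_k - \delta \in \Delta_+$; this is exactly the situation of Proposition~\ref{Psi's}, with $\gamma := \beta_k - \delta$, and again I rewrite $a_k$ via Proposition~\ref{xi's} ($\xi_j$ in terms of $\xi_i$ through bracketing with $e_\gamma$) and use Proposition~\ref{Psi's}(b) (when $|\Psi_i| \ge 2$, $\gamma$ is the unique such positive root) to see that the whole sum collapses to $\xi_i(x')$ times $e_{-(\text{something})}v_{\lambda_j}$, which vanishes by $\xi_i(x') = 0$. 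The case $|\Psi_i| = 1$ is handled directly. Assembling $x w_j = x' w_j + x'' w_j = 0$ in all cases completes the proof.
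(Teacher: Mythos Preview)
Your decomposition into $S$-weight components is the same organizing principle as the paper's, and your treatment of the case $\varphi=\varphi_j$ (the paper's Step~1) is correct and matches the paper exactly. The gap is in the non-active case, which you yourself flag as ``the main obstacle,'' and your conjectured resolution there is wrong.

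Concretely: for a non-active root $\gamma$ with $\beta_k-\gamma=\delta\in\Delta_+$, it is \emph{not} true in general that no such $\gamma$ exists, nor do the coefficients cancel against one another via a linear relation on the $a_k$. What actually happens is that each individual term vanishes because $e_{-\delta}v_{\lambda_j}=0$. Indeed $\delta$ is an active root subordinate to $\beta_k$, so $\pi(\beta_k)\notin\Supp\delta$; then Proposition~\ref{al_sim_be} (applied to $\delta$ and the other roots of $\Psi_j$, using $\Supp\delta\subset\Supp\beta_k$) gives $\pi(\Psi_j)\cap\Supp\delta=\varnothing$, hence $\langle\lambda_j\,|\,\delta\rangle=0$ and $e_{-\delta}v_{\lambda_j}=0$. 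This is the missing idea in your non-active case (and hence also for your ``$x''$'' pieces, which you defer to it).

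Your sketch of the $i\ne j$ case is on the right track but omits the crux. Writing $\gamma_k+\delta=\beta_k$ (with $\delta$ the unique root from Proposition~\ref{Psi's}(b)), $[e_{\gamma_k},e_\delta]=c_k e_{\beta_k}$ and $[e_{\gamma_k},e_{-\beta_k}]=d_k e_{-\delta}$, the sum does collapse to $\bigl(\sum_k \frac{a_k d_k y_k}{\lambda_j(h_{\beta_k})}\bigr)e_{-\delta}v_{\lambda_j}$, but to identify this with a multiple of $\xi_i(x')$ you must show the ratios $d_k/(c_k\lambda_j(h_{\beta_k}))$ are independent of~$k$. The paper does this via the Jacobi identity for $e_{\gamma_k},e_\delta,e_{-\beta_k}$, which yields $c_k h_{\beta_k}+d_k h_\delta+A_k h_{\gamma_k}=0$; evaluating at $\lambda_j$ and using $\lambda_j(h_{\gamma_k})=0$ (again by Proposition~\ref{al_sim_be}, since $\gamma_k$ is subordinate to $\beta_k$) gives $d_k/(c_k\lambda_j(h_{\beta_k}))=-1/\lambda_j(h_\delta)$, a constant. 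Without this computation the ``collapse'' you assert is unjustified.
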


\begin{proof}[Proof \textup{consists of three steps.}]~

\textit{Step}~1. Let us prove that $w_j$ is annihilated by
$\mathfrak n \cap \mathfrak u_j$. For this purpose, we take an
arbitrary element $x = x_1e_{\beta_1} + \ldots + x_pe_{\beta_p} \in
\mathfrak n \cap \mathfrak u_j$ and show that $x w_j = 0$. In view
of Lemma~\ref{highest_vector&two_roots}(a) one has
$$
x w_j = \left(\sum \limits_{k=1}^p x_ke_{\beta_k}\right) \left( \sum
\limits_{l=1}^p \cfrac{a_l}{\lambda_j(h_{\beta_l})}
e_{-\beta_l}\right) v_{\lambda_j} = \sum \limits_{k=1}^p \sum
\limits_{l=1}^p \left(\cfrac{a_lx_k}{\lambda_j(h_{\beta_l})}
[e_{\beta_k}, e_{-\beta_l}] v_{\lambda_j} \right).
$$
By Corollary~\ref{root_difference}, for all $k \ne l$ one has
$\beta_k - \beta_l \notin \Delta$, whence $[e_{\beta_k},
e_{-\beta_l}] = 0$. Therefore
$$
x w_j = \sum \limits_{k=1}^p
\left(\cfrac{a_kx_k}{\lambda_j(h_{\beta_k})} [e_{\beta_k},
e_{-\beta_k}] v_{\lambda_j} \right) = \sum \limits_{k=1}^p
\left(\cfrac{a_kx_k}{\lambda_j(h_{\beta_k})}h_{\beta_k}
v_{\lambda_j} \right) = \left(\sum \limits_{k=1}^p a_k x_k \right)
v_{\lambda_j} = 0,
$$
where the relations $h_{\beta_k} v_{\lambda_j} =
\lambda_j(h_{\beta_k})v_{\lambda_j}$ and $a_1x_1 + \ldots + a_px_p =
0$ are taken into account.

\textit{Step}~2. Let us prove that $w_j$ is annihilated by
$\mathfrak n \cap \mathfrak u_i$ for $i\ne j$. Suppose that $\Psi_i
= \{\gamma_1, \ldots, \gamma_q\}$. If $|\Psi_i| = 1$, then
$\mathfrak n \cap \mathfrak u_i = \{0\}$ and there is nothing to
prove. So further we assume $|\Psi_i| \geqslant 2$. The linear
function $\xi_i \in \mathfrak u_i^*$ is determined by a set of
non-zero numbers $b_1, \ldots, b_q$ so that an element
$y_1e_{\gamma_1} + \ldots + y_qe_{\gamma_q}$ lies in $\mathfrak n$
if and only if $b_1y_1 + \ldots + b_qy_q = 0$. Suppose $y =
y_1e_{\gamma_1} + \ldots + y_qe_{\gamma_q} \in \mathfrak n \cap
\mathfrak u_i$. Let us show that $y w_j = 0$. In view of
Lemma~\ref{highest_vector&two_roots}(a) we have
$$
y w_j = \left(\sum \limits_{k=1}^q y_k e_{\gamma_k}\right) \left(
\sum \limits_{l=1}^p \cfrac{a_l}{\lambda_j(h_{\beta_l})}
e_{-\beta_l}\right) v_{\lambda_j} = \sum \limits_{k=1}^q \sum
\limits_{l=1}^p \left(\cfrac{a_l y_k}{\lambda_j(h_{\beta_l})}
[e_{\gamma_k}, e_{-\beta_l}] v_{\lambda_j} \right).
$$

Assume that for some $k \in \{1, \ldots, q\}$ and $l \in \{1,
\ldots, p\}$ the element $[e_{\gamma_k}, e_{-\beta_l}]$ acts
non-trivially on~$v_{\lambda_j}$. Then by
lemma~\ref{highest_vector&two_roots}(b) we get $\gamma_k - \beta_l =
-\delta$ for some ${\delta \in \Delta_+}$. In view of
Proposition~\ref{Psi's}(a) one has $\Psi_i + \delta \subset \Psi_j$.
Since $|\Psi_i| \geqslant 2$, by Proposition~\ref{Psi's}(b) the root
$\delta$ is the only positive root with property $\Psi_i + \delta
\subset \Psi_j$. Renumbering the roots in~$\Psi_j$, without loss of
generality we may assume that $\gamma_1 + \delta = \beta_1, \ldots,
\gamma_q + \delta = \beta_q$. Then
$$
y w_j = \sum \limits_{k=1}^q \left(\cfrac{a_k
y_k}{\lambda_j(h_{\beta_k})} [e_{\gamma_k}, e_{-\beta_k}]
v_{\lambda_j} \right) = \left(\sum \limits_{k=1}^q \cfrac{a_k d_k
y_k}{\lambda_j(h_{\beta_k})}\right) e_{-\delta} v_{\lambda_j},
$$
where for $k = 1, \ldots, q$ the numbers $d_k$ are such that
$[e_{\gamma_k}, e_{-\beta_k}] = d_k e_{-\delta}$. Further, for $k =
1, \ldots, q$ one has $[e_{\gamma_k}, e_\delta] = c_k e_{\beta_k}$,
where $c_k \ne 0$. In view of Proposition~\ref{xi's} we may assume
that $b_k = a_k c_k$ for all $k = 1, \ldots, q$.

To complete the proof, it suffices to show that
$$
\cfrac{a_1 d_1 y_1}{\lambda_j(h_{\beta_1})} + \ldots + \cfrac{a_q
d_q y_q}{\lambda_j(h_{\beta_q})} = 0.
$$
Substituting $a_k = \cfrac{b_k}{c_k}$ into this formula for $k = 1,
\ldots, q$ and taking into account the condition $b_1 y_1 + \ldots +
b_q y_q = 0$, we find that now it suffices to prove the relation
$$
\cfrac{d_1}{c_1 \lambda_j(h_{\beta_1})} = \ldots = \cfrac{d_q}{c_q
\lambda_j(h_{\beta_q})}.
$$
To this end, for fixed $k \in \{1, \ldots, q\}$ we regard the three
elements $e_{\gamma_k}, e_\delta, e_{-\beta_k}$ and write down the
Jacobi identity for them:
$$
[[e_{\gamma_k}, e_\delta], e_{-\beta_k}] + [[e_{-\beta_k},
e_{\gamma_k}], e_\delta] + [[e_\delta, e_{-\beta_k}], e_{\gamma_k}]
= 0,
$$
or
$$
c_k h_{\beta_k}+d_k h_\delta + A_k h_{\gamma_k} = 0,
$$
where $A_k$ is some number. Acting on $v_{\lambda_j}$ by both sides
of this equality, we obtain
$$
c_k \lambda_j(h_{\beta_m}) + d_k \lambda_j(h_{\delta}) + A_k
\lambda_j(h_{\gamma_k}) = 0.
$$
Since $\gamma_k$ is an active root subordinate to the active
root~$\beta_k$, it follows that the simple root $\pi(\beta_k) =
\alpha_{j_l}$ is not contained in the set $\Supp \gamma_k$, whence
$\omega_{j_l}(h_{\gamma_k}) = 0$. Further, in view of condition
$\Supp \gamma_m \subset \Supp \beta_m$ and
Proposition~\ref{al_sim_be} we obtain ${\pi(\Psi_j) \cap \Supp
\gamma_k = \varnothing}$. Hence $\omega_{j_1}(h_{\gamma_k}) = \ldots
= \omega_{j_r} (h_{\gamma_k}) = 0$ and $\lambda_j(h_{\gamma_k}) =
0$. Thus, $c_k \lambda_j(h_{\beta_k}) + {d_k \lambda_j(h_{\delta}) =
0}$. We note that $\lambda_j(h_{\delta}) = \langle \lambda_j |\,
\delta \rangle = \langle\omega_{j_l} |\, \delta\rangle > 0$ because
$\alpha_{j_l} \in \Supp \delta$. Hence the quantity
$$
\cfrac{d_k}{c_k \lambda_j(h_{\beta_k})} =
-\cfrac1{\lambda_j(h_{\delta})}
$$
is independent of~$k$, as required.

\textit{Step}~3. Let us prove that $w_j$ is annihilated by
$\mathfrak g_\gamma$ for every $\gamma \in \Delta_+ \backslash
\Psi$. In view of Lemma~\ref{highest_vector&two_roots}(a) we have
$$
e_{\gamma} w_j = e_{\gamma} \left( \sum \limits_{k=1}^p
\cfrac{a_k}{\lambda_j(h_{\beta_k})} e_{-\beta_k}\right)
v_{\lambda_j} = \sum \limits_{k=1}^p
\left(\cfrac{a_k}{\lambda_j(h_{\beta_k})} [e_{\gamma}, e_{-\beta_k}]
v_{\lambda_j} \right).
$$
Let us show that each summand of the latter sum equals zero. Assume
the converse: $[e_{\gamma}, e_{-\beta_k}] v_{\lambda_j} \ne 0$ for
some $k \in \{1, \ldots, p\}$. Then by
Lemma~\ref{highest_vector&two_roots}(b) we obtain $\delta = {\beta_k
- \gamma \in \Delta_+}$. Hence $e_{-\delta} v_{\lambda_j} \ne 0$ and
$\langle \lambda_j |\, \delta \rangle
>0$. On the other hand, from the conditions $\beta_k = \gamma + \delta$
and $\gamma \notin \Psi$ it follows that $\delta$ is an active root
subordinate to the active root~$\beta_k$. This means that
$\pi(\beta_k) \notin \Supp \delta$. Then by
Proposition~\ref{al_sim_be} one has ${\pi(\Psi_i) \cap \Supp \delta
= \varnothing}$, whence $\langle \lambda_j |\, \delta \rangle = 0$.
This contradiction completes \textit{Step}~3 and the proof of the
proposition.
\end{proof}

\begin{remark}
If $p = 1$ (that is, $|\Psi_j| = 1$), then the proof of
Proposition~\ref{annul} considerably simplifies. Namely, in this
case there is nothing to prove at \textit{Step}~1 (since $\mathfrak
n \cap \mathfrak u_j = 0$) whereas \textit{Step}~2 looses its
substantial part: for $i \ne j$ and $|\Psi_i| \geqslant 2$ the
condition $\Psi_i + \gamma \subset \Psi_j$ can hold for no root
$\gamma \in \Delta_+$. We also note that for $p = 1$ the vector
$w_j$ is proportional to~$e_{-\beta_1} v_{\lambda_j}$.
\end{remark}

\subsection{} \label{indecomposable}

In this subsection we prove that the elements of the semigroup
$\widehat \Lambda_+(G/H)$ listed in the statement of
Theorem~\ref{main_theorem} are indecomposable in this semigroup.
Since $H$ is connected, in view of Remark~\ref{rem_irr} it suffices
to prove that the non-zero ${(B \times H)}$-semi-invariant regular
functions on~$G$ corresponding to the above-mentioned elements are
irreducible in~$\mathbb C[G]$.

For $i = 1, \ldots, n$ we denote by $P_i$ the function in $\mathbb
C[G]$ corresponding to the element $v_{\omega_i^*} \otimes
v_{\omega_i} \in V(\omega_i^*) \otimes V(\omega_i)$ under
isomorphism~(\ref{GG-module}). One has $P_i(g) = \langle
v_{\omega_i^*}, g v_{\omega_i} \rangle$ for all $g \in G$. The
function $P_i$ is $(B \times H)$-semi-invariant of weight
$(\omega_i^*, \tau(\omega_i))$. Similarly, for $j = 1, \ldots, m$ we
denote by $Q_j$ the function in $\mathbb C[G]$ corresponding to the
element $v_{\lambda_j^*} \otimes w_j \in V(\lambda_j^*) \otimes
V(\lambda_j)$ under isomorphism~(\ref{GG-module}). We have $Q_j(g) =
\langle v_{\lambda_j^*}, g w_j \rangle$ for all~$g \in G$. The
function $Q_i$ is $(B \times H)$-semi-invariant of weight
$(\lambda_j^*, \tau(\lambda_j) - \varphi_j)$.

\begin{lemma}\label{irreducible}
Suppose that $i \in \{1, \ldots, n\}$ and $v \in V(\omega_i)
\backslash \{0\}$. Then the function $f \in \mathbb C[G]$
corresponding to the element $v_{\omega_i^*} \otimes v \in
V(\omega_i^*) \otimes V(\omega_i)$ under
isomorphism~\textup{(\ref{GG-module})} is irreducible in~$\mathbb
C[G]$.
\end{lemma}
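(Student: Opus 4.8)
The plan is to realize the function $f$ as a matrix coefficient that factors in a controlled way through a flag variety, and to use the factoriality of $\mathbb C[G]$ together with the structure of $\mathbb C[G]$ as a $(G\times G)$-module to pin down its irreducible components. Since $\omega_i$ is a fundamental weight, the module $V(\omega_i)$ generates a particularly simple subalgebra: the span of all matrix coefficients $g\mapsto\langle u, g v\rangle$ with $u\in V(\omega_i^*)$, $v\in V(\omega_i)$ is exactly the linear span $M_i$ of the minor-type functions attached to $\omega_i$, and one knows (for simply connected $G$) that the $P_i=\langle v_{\omega_i^*}, g v_{\omega_i}\rangle$ are prime elements of $\mathbb C[G]$ generating the class group, with $M_i=\mathbb C[G]\cdot P_i \cap M_i$ in a suitable sense. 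So the first step is to observe that $f\in M_i$ and that $f$ is $U^-$-semi-invariant on the \emph{left} of weight $-\omega_i^*$ (because its left weight comes from $v_{\omega_i^*}$, a lowest-weight vector for the left action once we use the embedding from (\ref{GG-module})); more precisely $f$ lies in the one-dimensional-on-the-left isotypic piece $v_{\omega_i^*}\otimes V(\omega_i)$.

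Next I would argue by degree and divisor considerations. Suppose $f = f_1 f_2$ is a nontrivial factorization in $\mathbb C[G]$. Using the $(G\times G)$-grading coming from (\ref{GG-module}), restrict attention to the left action only: the left $U$-highest (equivalently, since we use $v_{\omega_i^*}$, the relevant $B$-weight) structure forces each $f_s$ to be a left $B$-eigenfunction, and the left weights add up to $\omega_i^*$. But $\omega_i^*$ is a fundamental weight, hence indecomposable in the monoid $\Lambda_+(G)$ of dominant weights; therefore one of the two factors, say $f_2$, has left weight $0$, i.e.\ $f_2$ is left $G$-invariant (here one uses that a left $B$-semi-invariant of weight $0$ in $\mathbb C[G]$ is a constant, since $\mathbb C[G]^{B\text{-left}}=\mathbb C[B\backslash G]$ and $B\backslash G$ is a complete homogeneous space with only constant regular functions). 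Hence $f_2$ is a nonzero constant and the factorization is trivial. This is the heart of the argument: irreducibility of $f$ is reduced to the indecomposability of the fundamental weight $\omega_i^*$ in $\Lambda_+(G)$ together with the completeness of $G/B$.

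The one point that needs care — and which I expect to be the main technical obstacle — is the claim that a nonzero factorization $f=f_1f_2$ in $\mathbb C[G]$ automatically respects the left $B$-eigenstructure, i.e.\ that each $f_s$ can be taken left $B$-semi-invariant. This uses that $\mathbb C[G]$ is factorial (for simply connected $G$, see~\cite[Corollary from Proposition~1]{Pop}) and that $B$ acting on the left has no nonconstant invertible semi-invariants of negative weight; concretely, $B$ being connected and solvable, the $B$-action on the set of irreducible factors of $f$ is by permutations, but connectedness forces it to be trivial, so each irreducible factor is itself a left $B$-eigenfunction, and the weights of the factors sum to $\omega_i^*$. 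Once this normalization is in place the rest is immediate. I would also record the dual remark that the same argument, applied on the right, is what will later be needed for the functions $Q_j$: there the obstruction will instead be whether $\lambda_j$ is indecomposable among the relevant weights, which is where the combinatorics of $\pi(\Psi_j)$ enters — but for the present lemma, with a fundamental weight, no such subtlety arises.
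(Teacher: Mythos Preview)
Your argument is correct and follows essentially the same route as the paper. Both proofs rest on the two facts that (i) any left $B$-semi-invariant regular function on $G$ has weight in $\Lambda_+(G)$, and (ii) the fundamental weight $\omega_i^*$ is indecomposable in $\Lambda_+(G)$ while the weight-$0$ piece consists of constants. The only packaging difference is that the paper works inside ${}^U\mathbb C[G]$ and then invokes \cite[Theorem~3.17]{VP} (for the connected unipotent group $U$) to transfer irreducibility back to $\mathbb C[G]$, whereas you reprove that transfer step directly using factoriality of $\mathbb C[G]$ and connectedness of $B$ to force the irreducible factors of $f$ to be $B$-eigenfunctions. Your first paragraph (flag varieties, class group, $M_i$) is unnecessary scaffolding and can be dropped; the substance begins and ends with your second and third paragraphs.
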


\begin{proof}
Regard the algebra ${}^U\mathbb C[G]$. In view of
isomorphism~(\ref{GG-module}) we have
$$
{}^U \mathbb C[G] \simeq \bigoplus \limits_{\lambda \in
\Lambda_+(G)} v_{\lambda^*} \otimes V(\lambda),
$$
where the component $v_{\lambda^*} \otimes V(\lambda)$ is regarded
as a subspace in $V(\lambda^*) \otimes V(\lambda)$. The action on
the left of the torus $T$ determines a grading on~${}^U \mathbb
C[G]$ by elements of $\Lambda_+(G)$ in such a way that the component
of weight $\lambda^*$ corresponds to the subspace $v_{\lambda^*}
\otimes V(\lambda)$. The hypothesis of the lemma implies that $f$ is
contained in the component of weight $\omega_i^*$ of ${}^U \mathbb
C[G]$. Since the component of weight $0$ contains only the constants
and the weight $\omega_i^*$ admits no non-trivial expression as a
sum of two elements in~$\Lambda_+(G)$, it follows that the function
$f$ is irreducible in ${}^U \mathbb C[G]$. It remains to notice
that, for a function in ${}^U \mathbb C[G]$, irreducibility in ${}^U
\mathbb C[G]$ is equivalent to irreducibility in~$\mathbb C[G]$,
since the group $U$ is connected and has no non-trivial characters
(see~\cite[Theorem~3.17]{VP}).
\end{proof}

\begin{corollary}
For all $i = 1, \ldots, n$ the function $P_i$ is irreducible
in~$\mathbb C[G]$.
\end{corollary}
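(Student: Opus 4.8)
The plan is to deduce this statement immediately from Lemma~\ref{irreducible}. Recall that, by construction (see the paragraph preceding Lemma~\ref{irreducible}), the function $P_i$ is the element of $\mathbb C[G]$ corresponding under isomorphism~(\ref{GG-module}) to the vector $v_{\omega_i^*} \otimes v_{\omega_i} \in V(\omega_i^*) \otimes V(\omega_i)$, where $v_{\omega_i}$ is the fixed highest-weight vector of $V(\omega_i)$. In particular, $v_{\omega_i}$ is a non-zero element of $V(\omega_i)$, so Lemma~\ref{irreducible}, applied with $v = v_{\omega_i}$, yields at once that $P_i$ is irreducible in~$\mathbb C[G]$.

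There is essentially no obstacle here: the corollary is a direct specialization of Lemma~\ref{irreducible} to the highest-weight vector. The only thing to verify is that the function denoted $P_i$ is literally the one to which the lemma applies, and this is immediate from its definition. Equivalently, one may phrase the argument by noting that $P_i$ lies in the weight-$\omega_i^*$ component of the algebra ${}^U\mathbb C[G]$, which, as established in the proof of Lemma~\ref{irreducible}, contains only irreducible functions (apart from the constants, which sit in the weight-$0$ component).
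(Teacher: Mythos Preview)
Your proposal is correct and is precisely the argument the paper intends: the corollary is stated without proof immediately after Lemma~\ref{irreducible}, and your application of that lemma with $v = v_{\omega_i}$ is exactly the intended specialization.
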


We note that for every $i = 1, \ldots, n$ the function $P_i$ is not
only $(B\times H)$-semi-invariant, but also $(B\times
B)$-semi-invariant. Therefore the corresponding $(B \times
H)$-stable prime divisor $D_i \subset G$ is also $(B \times
B)$-stable. As was already mentioned in \S\,\ref{rank} (see
\textit{Proof}~2 of Proposition~\ref{prop_rank}), the complement in
$G$ of the open Bruhat cell $G_0 = B \sigma_0 B$ is the union of
exactly $n$ prime divisors, which are $(B \times B)$-stable. It is
clear that these divisors are nothing else but $D_1, \ldots, D_n$.

To complete the proof of Theorem~\ref{main_theorem} it remains to
prove that the function $Q_j$ is irreducible in $\mathbb C[G]$ for
$j = 1, \ldots, m$.

\begin{proposition}\label{non-divisible}
For all $i = 1, \ldots, n$ and $j = 1, \ldots, m$ the function $Q_j$
is not divisible by~$P_i$.
\end{proposition}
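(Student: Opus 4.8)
The plan is to analyse the quotient that a factorization $Q_j = P_i R$ would produce. Suppose $P_i \mid Q_j$, say $Q_j = P_i R$ with $R \in \mathbb{C}[G]$. Since $\mathbb{C}[G]$ is a domain and $B$, $H$ act on it by algebra automorphisms, $R$ is a nonzero $(B \times H)$-semi-invariant function, of weight $(\lambda_j^* - \omega_i^*,\ \tau(\lambda_j) - \varphi_j - \tau(\omega_i))$. By the description of ${}^U\mathbb{C}[G] \simeq \bigoplus_\lambda v_{\lambda^*} \otimes V(\lambda)$ in \S\ref{EWS}, the left $T$-weight of $R$ confines it to the single summand $v_{(\lambda_j - \omega_i)^*} \otimes V(\lambda_j - \omega_i)$; identifying, as there, $P_i$ with $v_{\omega_i} \in V(\omega_i)$ and $Q_j$ with $w_j \in V(\lambda_j)$, and comparing $(G \times G)$-isotypic components in $Q_j = P_i R$, one obtains
$$
w_j \in v_{\omega_i} \cdot V(\lambda_j - \omega_i),
$$
where $v_{\omega_i}\cdot(-)$ denotes the Cartan multiplication, i.e.\ tensoring with $v_{\omega_i}$ followed by the projection $V(\omega_i) \otimes V(\lambda_j - \omega_i) \twoheadrightarrow V(\lambda_j)$. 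I shall show this inclusion cannot hold.

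A nonzero left-$B$-semi-invariant function on $G$ of weight $\mu$ exists only if $\mu \in \Lambda_+(G)$, and since $\langle \lambda_j \,|\, \alpha_k \rangle$ equals $1$ for $\alpha_k \in \pi(\Psi_j)$ and $0$ otherwise, the weight $\lambda_j^* - \omega_i^*$ is dominant exactly when $\alpha_i \in \pi(\Psi_j)$. Hence for $\alpha_i \notin \pi(\Psi_j)$ no such $R$ exists. If $\alpha_i \in \pi(\Psi_j)$ but $\lambda_j = \omega_i$ (equivalently $\pi(\Psi_j) = \{\alpha_i\}$), then $R$ is left-$B$-invariant, hence constant, so $Q_j$ would be proportional to $P_i$ — impossible, because the right $H$-weights of $Q_j$ and $P_i$ differ by $\varphi_j \ne 0$ (the weights $\varphi_1, \dots, \varphi_m$ are linearly independent, in particular nonzero). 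There remains the case $\alpha_i \in \pi(\Psi_j)$, $\lambda_j \ne \omega_i$, so that $|\pi(\Psi_j)| \ge 2$.

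Put $W = v_{\omega_i} \cdot V(\lambda_j - \omega_i) \subset V(\lambda_j)$. The Cartan projection is $G$-equivariant and the line $\mathbb{C}v_{\omega_i}$ is stabilized by the maximal parabolic subalgebra $\mathfrak{p}_i \supset \mathfrak{b}$ whose Levi part omits $\alpha_i$; hence $W$ is a $\mathfrak{p}_i$-submodule of $V(\lambda_j)$, and it is a sum of $T$-weight spaces. Crucially, $W$ does not meet $V(\lambda_j)_{\lambda_j - \alpha_i}$: every weight of $W$ has the form $\omega_i + \nu'$ for a weight $\nu'$ of $V(\lambda_j - \omega_i)$, whereas $(\lambda_j - \alpha_i) - \omega_i = (\lambda_j - \omega_i) - \alpha_i$ is not a weight of $V(\lambda_j - \omega_i)$ since $\langle \lambda_j - \omega_i \,|\, \alpha_i \rangle = 0$. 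Fix now an active root $\beta \in \Psi_j$ with $\pi(\beta) = \alpha_i$; then $\alpha_i \in \Supp \beta$, so $\langle \lambda_j \,|\, \beta \rangle > 0$, and by the formula for $w_j$ in \S\ref{contained} the weight-$(\lambda_j - \beta)$ component of $w_j$ is a nonzero multiple of $e_{-\beta} v_{\lambda_j}$. If $w_j \in W$, this component also lies in $W$ (as $W$ is $T$-stable), so the $\mathfrak{p}_i$-submodule of $V(\lambda_j)$ generated by $e_{-\beta} v_{\lambda_j}$ is contained in $W$. Since $V(\lambda_j)_{\lambda_j - \alpha_i}$ is one-dimensional, spanned by the nonzero vector $e_{-\alpha_i} v_{\lambda_j}$ (because $\langle \lambda_j \,|\, \alpha_i \rangle = 1$), it therefore suffices to prove that $e_{-\alpha_i} v_{\lambda_j}$ lies in the $\mathfrak{p}_i$-submodule generated by $e_{-\beta} v_{\lambda_j}$: this contradicts $W \cap V(\lambda_j)_{\lambda_j - \alpha_i} = 0$ and completes the argument.

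This last inclusion is the crux, and it is here that the active-root theory of \cite{Avd_solv} is needed. The idea is to descend from $\beta$ to $\alpha_i = \pi(\beta)$ along a chain of active roots, each subordinate to its predecessor, peeling off at every step a positive root whose support contains $\pi(\beta)$ and applying the corresponding raising operator — which lies in the nilradical of $\mathfrak{p}_i$ — to the vector obtained so far. One must check, using \cite[Proposition~3]{Avd_solv} and the subordination formalism, that such a chain exists with the successive raisings removing exactly the $\alpha_i$-part of the weight; that the relevant structure constants are nonzero (Proposition~\ref{xi's}); and that the intermediate vectors $e_{-\beta'} v_{\lambda_j}$ stay nonzero, i.e.\ $\langle \lambda_j \,|\, \beta' \rangle > 0$, equivalently $\pi(\Psi_j) \cap \Supp \beta' \ne \varnothing$ — and this is precisely where the hypothesis $|\pi(\Psi_j)| \ge 2$ enters, perhaps in combination with Proposition~\ref{al_sim_be}. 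The outcome is a nonzero multiple of $e_{-\alpha_i} v_{\lambda_j}$ in the desired submodule; verifying it carefully is the main obstacle to overcome.
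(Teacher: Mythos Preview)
Your opening reduction agrees with the paper: the quotient $Q_j/P_i$ would be $(B\times H)$-semi-invariant of weight $(\lambda_j^*-\omega_i^*,\,\tau(\lambda_j-\omega_i)-\varphi_j)$, and dominance forces $\alpha_i\in\pi(\Psi_j)$. From there, however, the paper avoids the Cartan-multiplication picture entirely and proceeds much more directly. It observes that the function corresponding to $v_{\lambda_j^*}\otimes v_{\lambda_j}$ is (up to scalar) $P_{j_1}\cdots P_{j_r}$, so that $Q_j=f_j(P_{j_1}\cdots P_{j_r})$ with $f_j$ acting on $\mathbb C[G]$ via the right $\mathfrak g$-action. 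Since $f_j$ acts as a derivation, the Leibniz rule gives
\[
Q_j=(f_jP_i)\,P_{j_2}\cdots P_{j_r}+P_i\cdot f_j(P_{j_2}\cdots P_{j_r}),
\]
so $P_i\mid Q_j$ iff $P_i\mid f_jP_i$. But $f_jP_i$ corresponds to $v_{\omega_i^*}\otimes(f_jv_{\omega_i})$, the vector $f_jv_{\omega_i}$ is nonzero (pick $\beta\in\Psi_j$ with $\pi(\beta)=\alpha_i$, so $\langle\omega_i\,|\,\beta\rangle>0$), and Lemma~\ref{irreducible} then says $f_jP_i$ is irreducible and not proportional to $P_i$. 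That finishes the proof in one line, with no case distinction on $|\pi(\Psi_j)|$ and no further use of active-root theory.

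Your route via $W=v_{\omega_i}\cdot V(\lambda_j-\omega_i)$ and its $\mathfrak p_i$-stability is conceptually interesting, but the step you single out as ``the main obstacle'' is a genuine gap as written. Reaching $e_{-\alpha_i}v_{\lambda_j}$ from $e_{-\beta}v_{\lambda_j}$ by raising operators in $\mathfrak p_i$ is not automatic: one cannot pass through $v_{\lambda_j}$ (since $e_{-\alpha_i}\notin\mathfrak p_i$), and a single step via $e_{\beta-\alpha_i}$ is unavailable whenever $\beta-\alpha_i\notin\Delta$ (e.g.\ $\beta=\alpha_1+\alpha_2+\alpha_3$, $\alpha_i=\alpha_2$ in type $\mathsf A_3$). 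A chain argument along subordinate active roots could plausibly be made to work, but you would need to control $\pi(\beta')$ and $\langle\lambda_j\,|\,\beta'\rangle$ at every intermediate stage, and neither Proposition~\ref{xi's} nor Proposition~\ref{al_sim_be} does this for you directly. Given that the derivation trick above circumvents all of this, I would recommend replacing the last two paragraphs of your argument by that device.
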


\begin{proof}
We fix $i \in \{1, \ldots, n\}$, $j \in \{1, \ldots, m\}$ and show
that $Q_j$ is not divisible by~$P_i$. The function $Q_j/P_i$ is a
$(B\times H)$-semi-invariant rational function on $G$ of weight
$({\lambda_j^* - \omega_i^*}, {\tau(\lambda_j - \omega_i) -
\varphi_j)}$. If $i \notin \{j_1, \ldots, j_r\}$, then the weight
$\lambda_j^* - \omega_i^*$ is not dominant, therefore the function
$Q_j/P_i$ is not regular. Taking this into account, further we
suppose that $i \in \{j_1, \ldots, j_r\}$. Moreover, without loss of
generality we may assume $i = j_1$. We recall that $w_j = f_j
v_{\lambda_j}$, where the element $f_j$ is defined in
\S\,\ref{contained}. The function corresponding to the element
$v_{\lambda_j^*} \otimes v_{\lambda_j}$ under
isomorphism~(\ref{GG-module}) is $(B\times H)$-semi-invariant of
weight $(\lambda_j^*, \lambda_j)$. In view of the condition $\dim
A(\lambda_j^*, \lambda_j) \leqslant 1$ we obtain that this function
coincides with the function $c P_i P_{j_2} \ldots P_{j_r}$ for some
$c \ne 0$. Without loss of generality we assume $c=1$. Then one has
$Q_j = f_j (P_i P_{j_2} \ldots P_{j_r})$, where the action on
$\mathbb C[G]$ of the Lie algebra $\mathfrak g$ is induced by the
action of $G$ on the right. The element $f_j$ acts on the product
$P_i P_{j_2} \ldots P_{j_r}$ as a derivation, therefore $Q_j = (f_j
P_i)P_{j_2} \ldots P_{j_r} +P_i \cdot f_j(P_{j_2} \ldots P_{j_r})$.
Hence the divisibility of $Q_j$ by $P_i$ is equivalent to the
divisibility of $f_jP_i$ by~$P_i$. Under
isomorphism~(\ref{GG-module}), the function $f_jP_i$ corresponds to
the vector $v_{\omega_i^*} \otimes (f_j v_{\omega_i}) \in
V(\omega_i^*) \otimes V(\omega_i)$. Let us show that this vector is
non-zero, that is, $f_j v_{\omega_i} \ne 0$. For that, it suffices
to find a root $\beta \in \Psi_j$ such that $e_{-\beta}v_{\omega_i}
\ne 0$. The last condition is equivalent to the inequality $\langle
\omega_i |\, \beta \rangle
>0$, which surely holds for any root $\beta \in \Psi_j$ with
$\pi(\beta) = \alpha_i$. Thus, the vector $f_j v_{\omega_i} \in
V(\omega_i)$ is non-zero. It is easy to see that this vector is not
proportional to~$v_{\omega_i}$. In view of Lemma~\ref{irreducible}
this implies that $f_jP_i$ is irreducible in $\mathbb C[G]$ and is
not proportional to~$P_i$. Hence $f_jP_i$ is not divisible by~$P_i$,
and so is~$Q_j$.
\end{proof}

In view of the condition $G \backslash G_0 = D_1 \cup \ldots \cup
D_n$ and Proposition~\ref{non-divisible} the proof of the
irreducibility in $\mathbb C[G]$ of each function $Q_j$ reduces to
the following proposition.

\begin{proposition}
For every $j = 1, \ldots, m$ the restriction of\, $Q_j$ to the open
cell $G_0$ is an irreducible function in~$\mathbb C[G_0]$.
\end{proposition}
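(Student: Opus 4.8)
The plan is to carry out the argument on the open cell $G_0 = B\sigma_0 B$ by means of the isomorphism of varieties $G_0 \simeq B\sigma_0 \times U$, $(b\sigma_0, u) \mapsto b\sigma_0 u$, so that $\mathbb C[G_0] \simeq \mathbb C[B\sigma_0] \otimes \mathbb C[U]$ is a localization of a polynomial algebra, in particular a factorial domain. First I would compute $Q_j|_{G_0}$ explicitly. Write $b = tu'$ with $t \in T$ and $u' \in U$; since $v_{\lambda_j^*}$ is a highest weight vector one has $b^{-1}v_{\lambda_j^*} = \lambda_j^*(t)^{-1}v_{\lambda_j^*}$, and using the $G$-invariance of the pairing between $V(\lambda_j^*)$ and $V(\lambda_j)$ we get
$$Q_j(b\sigma_0 u) = \langle v_{\lambda_j^*}, b\sigma_0 u\, w_j\rangle = \langle (b\sigma_0)^{-1}v_{\lambda_j^*}, u\, w_j\rangle = \lambda_j^*(t)^{-1}\langle \sigma_0^{-1}v_{\lambda_j^*}, u\, w_j\rangle .$$
As $\sigma_0$ represents the longest element of $W$, the vector $\sigma_0^{-1}v_{\lambda_j^*}$ spans the lowest weight line of $V(\lambda_j^*)$. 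Setting $q(u) = \langle \sigma_0^{-1}v_{\lambda_j^*}, u\, w_j\rangle \in \mathbb C[U]$ we obtain $Q_j|_{G_0} = \lambda_j^*(t)^{-1}\cdot q$, where the first factor is a unit of $\mathbb C[B\sigma_0]$; since $\mathbb C[B\sigma_0]$ is a domain over $\mathbb C$, it remains to prove that $q$ is irreducible in $\mathbb C[U]$.

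Next I would show that $q$ descends to a function on the $m$-dimensional space $U/N$. By Proposition~\ref{annul} the subalgebra $\mathfrak n$ annihilates $w_j$, so (as $N$ is connected) $n\, w_j = w_j$ for every $n \in N$; hence $q(un) = \langle \sigma_0^{-1}v_{\lambda_j^*}, un\, w_j\rangle = q(u)$, i.e.\ $q \in \mathbb C[U]^N = \mathbb C[U/N]$. By~\cite[Lemma~1.4]{Mon} there is an $S$-equivariant isomorphism $U/N \simeq \mathfrak u/\mathfrak n$, and by Theorem~\ref{solvable_spherical} the $S$-module $\mathfrak u/\mathfrak n$ is the direct sum of the one-dimensional weight spaces of the weights $\varphi_1, \ldots, \varphi_m$, which are linearly independent. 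Picking coordinates $z_1, \ldots, z_m$ on $U/N$ dual to these weight spaces, we get $\mathbb C[U/N] = \mathbb C[z_1, \ldots, z_m]$, with $z_i$ an $S$-semi-invariant of weight $\varphi_i$ for the action of $S$ on $U$ by conjugation.

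It remains to identify $q$ inside $\mathbb C[z_1, \ldots, z_m]$. The function $q$ is nonzero, because $Q_j|_{G_0} = \lambda_j^*(t)^{-1}q$ is nonzero ($Q_j \ne 0$ and $G_0$ is dense in $G$). Moreover $Q_j$ is $(B \times H)$-semi-invariant of weight $(\lambda_j^*, \tau(\lambda_j) - \varphi_j)$ (see \S\,\ref{contained}); transferring its $H$-semi-invariance through the identity $Q_j|_{G_0} = \lambda_j^*(t)^{-1}q$ and through the isomorphism $U/N \simeq \mathfrak u/\mathfrak n$ --- here one uses that right multiplication by $s \in S$ changes the $U$-component of $b\sigma_0 u$ by $u \mapsto s^{-1}us$ and that $\lambda_j$ restricts to $\tau(\lambda_j)$ on $S$ --- one checks that $q$ is $S$-semi-invariant of weight $\varphi_j$. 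Now a monomial $z_1^{d_1}\cdots z_m^{d_m}$ is $S$-semi-invariant of weight $d_1\varphi_1 + \ldots + d_m\varphi_m$, and by the linear independence of $\varphi_1, \ldots, \varphi_m$ the only solution of $d_1\varphi_1 + \ldots + d_m\varphi_m = \varphi_j$ in nonnegative integers is $d_i = \delta_{ij}$. Hence the space of $S$-semi-invariants of weight $\varphi_j$ in $\mathbb C[z_1, \ldots, z_m]$ is spanned by $z_j$, so $q = cz_j$ for some $c \ne 0$. This element is irreducible in $\mathbb C[z_1, \ldots, z_m]$, hence in $\mathbb C[U] = \mathbb C[U/N] \otimes \mathbb C[N]$, hence (multiplying by the unit $\lambda_j^*(t)^{-1}$) $Q_j|_{G_0}$ is irreducible in $\mathbb C[G_0]$, which completes the proof.

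The routine but slightly fiddly part is the bookkeeping in the last two paragraphs: one must verify precisely how right translation by $H = SN$ acts on the $U$-factor of $G_0 \simeq B\sigma_0 \times U$, how this matches the conjugation action of $S$ on $\mathfrak u/\mathfrak n$, and how the weight of $q$ behaves along the chain of subalgebras $\mathbb C[G_0] \supset \mathbb C[U] \supset \mathbb C[U/N] = \mathbb C[z_1, \ldots, z_m]$; irreducibility is preserved along this chain because at each step one passes to a tensor factor whose complement is a domain finitely generated over $\mathbb C$, so a prime element tensored with $1$ stays prime. The conceptual heart of the argument is Proposition~\ref{annul}: the relation $\mathfrak n\, w_j = 0$ is exactly what makes $q$ descend to the small quotient $U/N$, on which linear independence of the $\varphi_i$ forces $q$ to be a scalar multiple of a single coordinate.
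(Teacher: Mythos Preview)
Your proof is correct and follows essentially the same route as the paper's: both pass to the open cell $G_0 \simeq B\sigma_0 \times U$, use the $S$-equivariant isomorphism $U/N \simeq \mathfrak u/\mathfrak n$ together with the linear independence of the $\varphi_i$, and identify the relevant function with a single coordinate on $\mathfrak u/\mathfrak n$. The only tactical difference is that the paper first divides $Q_j$ by the units $P_{j_1}\cdots P_{j_r}$ and then uses the open $(B\times H)$-orbit in $G_0$ to reduce the problem to exhibiting \emph{some} irreducible function of weight $(0,-\varphi_j)$ (appealing to \cite[Theorem~3.17]{VP} to transfer irreducibility from ${}^U\mathbb C[G_0]^N$ to $\mathbb C[G_0]$), whereas you compute $Q_j|_{G_0}$ explicitly and track irreducibility through the tensor factorizations $\mathbb C[G_0]\simeq \mathbb C[B]\otimes\mathbb C[U]$ and $\mathbb C[U]\simeq \mathbb C[U/N]\otimes\mathbb C[N]$.
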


\begin{proof}
Let $j \in \{1, \ldots, m\}$. Then one has $\lambda_j = \omega_{j_1}
+ \ldots + \omega_{j_r}$. The restrictions to $G_0$ of the functions
$P_1, \ldots, P_n$ are invertible, therefore it suffices to prove
that the function $\widehat Q_j = Q_j/(P_{j_1} \ldots P_{j_r})$ is
irreducible in~$\mathbb C[G_0]$. We note that this function is
$(B\times H)$-semi-invariant of weight $(0, - \varphi_j)$. Since the
group $B \times H$ has an open orbit in~$G_0$, the subspace in
$\mathbb C[G_0]$ consisting of all $(B\times H)$-semi-invariant
functions of weight $(0, - \varphi_j)$ is one-dimensional. The proof
will be completed if we present an irreducible $(B\times
H)$-semi-invariant function of weight $(0, - \varphi_j)$ in~$\mathbb
C[G_0]$. The group $U \times N$ is connected and has no non-trivial
characters, hence in view of~\cite[Theorem~3.17]{VP} it suffices to
present an irreducible $(T\times S)$-semi-invariant function of
weight $(0, - \varphi_j)$ in~${}^U \mathbb C[G_0]^N$. One has ${}^U
\mathbb C[G_0]^N \simeq {}^U \mathbb C[U \times T\sigma_0 \times
U]^N \simeq \mathbb C[T \times U/N] \simeq \mathbb C[T \times
\mathfrak u/\mathfrak n]$, where the last isomorphism is due to an
$S$-equivariant isomorphism $U/N \simeq \mathfrak u/\mathfrak n$
(see~\cite[lemme~1.4]{Mon}). The group $T\times S$ acts on the
variety $T \times \mathfrak u/\mathfrak n$ as follows: any pair
$(t,s) \in T \times S$ takes each pair $(t_0, x) \in T \times
\mathfrak u/\mathfrak n$ to $(tt_0(\sigma_0 s^{-1} \sigma_0^{-1}),
sx)$. We recall that by Theorem~\ref{solvable_spherical} the
subspace $\mathfrak u/\mathfrak n$ is isomorphic as an $S$-module to
the direct sum $\mathbb C_{\varphi_1} \oplus \ldots \oplus \mathbb
C_{\varphi_m}$, where $\mathbb C_{\varphi_i} \simeq \mathbb C$ is
the subspace of weight $\varphi_i$ with respect to~$S$. Let $l_j$
denote the $j$th coordinate function on $\mathfrak u/\mathfrak n$.
Being a linear function, $l_j$ is an irreducible $S$-semi-invariant
function of weight~$-\varphi_j$. Hence the function $1 \otimes l_j
\in \mathbb C[T] \otimes \mathbb C[\mathfrak u/\mathfrak n] \simeq
\mathbb C[T \times \mathfrak u/ \mathfrak n]$ is an irreducible $(T
\times S)$-semi-invariant function of weight~$(0, -\varphi_j)$.
\end{proof}

\end{document}